\newcommand{\R}{\mathbb{R}}
\newcommand{\inr}[1]{\langle #1 \rangle}
\newcommand{\E}{\mathbb{E}}
\newcommand{\eps}{\varepsilon}
\newcommand{\vertiii}[1]{{\left\vert\kern-0.25ex\left\vert\kern-0.25ex\left\vert #1
    \right\vert\kern-0.25ex\right\vert\kern-0.25ex\right\vert}}
\newtheorem{Theorem}{Theorem}[section]
\newtheorem{Lemma}[Theorem]{Lemma}
\newtheorem{Definition}[Theorem]{Definition}
\newtheorem{Remark}[Theorem]{Remark}
\newtheorem{Example}[Theorem]{Example}
\def\IND{\mathbbm{1}}
\newcommand{\var}{\mathrm{Var}}
\def\IND{\mathbbm{1}}
\begin{document}

\title{Robust covariance estimation under $L_4-L_2$ norm equivalence.}
\author{Shahar Mendelson\thanks{Mathematical Sciences Institute, The Australian National University and LPSM, Sorbonne Universit{\'e}, \href{mailto:shahar.mendelson@upmc.fr}{shahar.mendelson@upmc.fr}} \and Nikita Zhivotovskiy\thanks{Higher School of Economics. Now visiting researcher at Google. Nikita Zhivotovskiy was supported by RSF grant No. 18-11-00132 \href{mailto:nikita.zhivotovskiy@phystech.edu}{nikita.zhivotovskiy@phystech.edu}}}
\date{}

\maketitle
\begin{abstract}
Let $X$ be a centered random vector taking values in $\R^d$ and let $\Sigma= \E (X\otimes X)$ be its covariance matrix. We show that if $X$ satisfies an $L_4-L_2$ norm equivalence (sometimes referred to as the bounded kurtosis assumption), there is a covariance estimator $\hat{\Sigma}$ that exhibits the optimal performance one would expect had $X$ been a gaussian vector.
The procedure also improves the current state-of-the-art regarding high probability bounds in the subgaussian case (sharp results were only known in expectation or with constant probability).

In both scenarios the new bounds do not depend explicitly on the dimension $d$, but rather on the effective rank of the covariance matrix $\Sigma$.
\end{abstract}

\section{Introduction}

The question of estimating the covariance of a random vector has been studied extensively in recent years (see, e.g., \cite{Koltch17, Lounici14, Minsk18a, Minsk17a, Minsk18} and references therein). To formulate the problem, let $X$ be a zero mean random vector taking its values in $\mathbb{R}^d$ and denote the covariance matrix by $\Sigma = \E (X\otimes X)$. Given a sample $X_1,...,X_N$ consisting of independent random vectors that are distributed according to $X$, the goal is to
select a matrix $\hat{\Sigma}$ that approximates $\Sigma$. While there are various notions of approximation, the focus of this note is on approximation with respect to the ($\ell_2 \to \ell_2$) operator norm, which from here on is denoted by $\| \ \|$.

One way of viewing the question of covariance estimation (with respect to any norm), is as a vector mean estimation problem. Indeed, if one sets $W=X \otimes X$, then $\E W = \Sigma$, and since one is given a sample $X_1,...,X_N$, the vectors $(X_i \otimes X_i)_{i=1}^N$ are $N$ independent copies of $W$. Thus, a matrix $\hat{W}$ that is a good approximation of the mean $\E W$ with respect to the underlying norm is a solution to the problem of estimating the covariance of $X$ with respect to that norm.

An immediate outcome of this simple observation is that the empirical mean
$$
\hat{\Sigma} = \frac{1}{N}\sum_{i=1}^N W_i = \frac{1}{N}\sum_{i=1}^N X_i \otimes X_i,
$$
which is the trivial choice for estimating the true mean, is a poor estimator unless the random vector $W$ has a `nice' tail behaviour (see, for example, the discussion in \cite{Mendelson18}). An example of a positive result of that flavour is Theorem 9 in \cite{Koltch17}, and to formulate it we need some definitions.
\begin{Definition} \label{def:effective-rank}
The \emph{effective rank} of a positive semidefinite square matrix $A \in \mathbb{R}^{d \times d}$ is given by
\begin{equation} \label{eq:eff-rank}
\mathbf{r}(A) = \frac{{\rm Tr}(A)}{\|A\|}.
\end{equation}
\end{Definition}
Clearly, $\mathbf{r}(A) \le d$ but the gap between $\mathbf{r}(A)$ and $d$ may be substantial. Recall that the $\psi_2$-norm of a centred real-valued random variable $Y$ is defined by
$$
\|Y\|_{\psi_2} = \inf\{c > 0: \E\exp(Y^2/c^2) \le 2\},
$$
and that there are absolute constants $c$ and $C$ such that
$$
c \|Y\|_{\psi_2} \leq \sup_{p \geq 2} \frac{\|Y\|_{L_p}}{\sqrt{p}} \leq C \|Y\|_{\psi_2}.
$$

\begin{Definition} \label{def:psi-2}
A random vector $X$ with values in $\R^d$ and with the mean $\mu$ is $L$-subgaussian if for every $t \in \R^d$ and every $p \geq 2$,
\begin{equation}
\label{eq:momentcomparison}
(\E|\inr{X-\mu,t}|^p)^{\frac{1}{p}} \leq L \sqrt{p}(\E\inr{X-\mu,t}^2)^{\frac{1}{2}}.
\end{equation}
\end{Definition}

It is standard to verify that a centred random vector is $L$-subgaussian if and only if its one-dimensional marginals $X_t=\inr{X,t}$ satisfy that
$\|X_t\|_{\psi_2} \leq c L \|X_t\|_{L_2}$ where $c$ is an absolute constant.

Among the class of $L$-subgaussian random vectors are vectors whose distribution is multivariate normal (denoted by $\mathcal{N}(\mu, \Sigma)$) and in which case $L$ is an absolute constant.  Another simple example are vectors $X$ whose components are independent copies of a zero mean random variable $Y$ that satisfies $\|Y\|_{\psi_2} < \infty$. Indeed, it is standard to show that for such a random vector and any $t \in \R^d$
\[
\sup\limits_{p \ge 2}\frac{(\E|\inr{X,t}|^p)^{\frac{1}{p}}}{\sqrt{p}} \le c\left(\sum\limits_{i = 1}^dt_i^2\|Y_i\|^2_{\psi_2}\right)^{\frac{1}{2}} = L(\E|\inr{X,t}|^2)^{1/2},
\]
where $c$ is an absolute constant and $L = c\|Y\|_{\psi_2}/\|Y\|_{L_2}$. 

\begin{Remark}
Observe that a different notion of subgaussian random vectors sometimes appears in literature: that a centred vector $X$ is called subgaussian if
\begin{equation}
\label{subgaussc}
\sup\limits_{t \in S^{d - 1}}\sup\limits_{p \ge 2} \frac{(\E|\inr{X, t}|^p)^{\frac{1}{p}}}{\sqrt{p}}= C < \infty
\end{equation}
where $S^{d-1}$ is the Euclidean unit sphere in $\R^d$. In other words, according to this notion, a centred random vector is subgaussian if all its one-dimensional marginals have a finite $\psi_2$ norm, and those norms are all bounded by $C$. Unlike the notion in Definition \ref{def:psi-2}, this does not imply a $\psi_2-L_2$ norm equivalence of one-dimensional marginals of the random vector. As a result, the constant $C$ in \eqref{subgaussc} may change dramatically under linear transformations of $X$, while the factor $L$ in \eqref{eq:momentcomparison} does not.
\end{Remark}

Throughout this note the notion of a subgaussian random vector that is used is the one from Definition \ref{def:psi-2}.

With all the required definitions in place, one may formulate the covariance estimate from \cite{Koltch17}.

\begin{Theorem} \label{thm:Kolt}
For every $L \geq 1$ there exists a constant $c(L)$ for which the following holds.
Let $X$ be an $L$-subgaussian random vector. Then with probability at least $1-\delta$
\begin{equation} \label{eq:subgauss}
\left\|\frac{1}{N}\sum_{i=1}^N  X_i \otimes X_i - \Sigma\right\| \leq c(L) \|\Sigma\|\left(\sqrt{\frac{\mathbf{r}(\Sigma)}{N}} + \frac{\mathbf{r}(\Sigma)}{N} + \sqrt{\frac{\log ({2}/{\delta})}{N}} + \frac{\log ({2}/{\delta})}{N}\right)
\end{equation}
\end{Theorem}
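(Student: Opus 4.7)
The plan is to reduce the operator-norm deviation to a uniform empirical-process bound on the sphere. Since $\frac{1}{N}\sum X_i \otimes X_i - \Sigma$ is symmetric, its norm equals
$$\sup_{t \in S^{d-1}} \left| \frac{1}{N}\sum_{i=1}^N Z_i(t) \right|, \qquad Z_i(t) = \inr{X_i,t}^2 - \E \inr{X,t}^2.$$
After rescaling one may assume $\|\Sigma\| = 1$, so $\|\inr{X,t}\|_{\psi_2} \le cL$ for $t \in S^{d-1}$ and $\|Z_i(t)\|_{\psi_1} \le cL^2$. For a fixed $t$, Bernstein's inequality yields
$$\Pr\!\left[\left|\frac{1}{N}\sum_{i=1}^N Z_i(t)\right| \ge u\right] \le 2\exp\!\left(-cN\min\!\left(\frac{u^2}{L^4}, \frac{u}{L^2}\right)\right),$$
which is already the correct single-point subexponential form dictating the two $\log(2/\delta)$ terms in the conclusion.

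The harder task is upgrading this to a uniform bound over $S^{d-1}$ with the right complexity. I would use a generic chaining argument tailored to mixed subgaussian / subexponential increments. The crucial inequality is the factorization
$$Z_i(s) - Z_i(t) = \inr{X_i, s-t}\inr{X_i, s+t} - \E \inr{X, s-t}\inr{X, s+t},$$
from which one reads off two metrics: a subgaussian metric $d_2(s,t) = cL^2 \|\Sigma^{1/2}(s-t)\|_2$ (from the product of two $\psi_2$ factors) and a subexponential metric $d_1(s,t) = cL^2 \|\Sigma^{1/2}(s-t)\|_2$ as well, up to a $\sup_u \|\inr{X,u}\|_{\psi_2} \le cL$ factor. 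Applying Talagrand's majorizing-measure theorem, the relevant $\gamma_2$ functional on $S^{d-1}$ with respect to the metric $\|\Sigma^{1/2}(\cdot)\|_2$ is controlled by $\E\|G\|_2 \le \sqrt{\mathrm{Tr}(\Sigma)} = \sqrt{\mathbf{r}(\Sigma)}$ where $G \sim \mathcal{N}(0,\Sigma)$, and an analogous bound holds for $\gamma_1$.

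Plugging these complexities into Dirksen/Talagrand-style Bernstein chaining (or, equivalently, applying Theorem 3.5 of Dirksen's ``Tail bounds via generic chaining'') gives, with probability at least $1 - \delta$,
$$\sup_{t \in S^{d-1}} \left|\frac{1}{N}\sum_{i=1}^N Z_i(t)\right| \le c(L)\left(\frac{\gamma_2^2}{N} + \frac{\gamma_2}{\sqrt{N}} + \sqrt{\frac{\log(2/\delta)}{N}} + \frac{\log(2/\delta)}{N}\right),$$
which, after undoing the normalization $\|\Sigma\|=1$ and substituting $\gamma_2 \lesssim L\sqrt{\mathbf{r}(\Sigma)}$, produces precisely the four terms of \eqref{eq:subgauss}.

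The main obstacle is the chaining step: a naive one-net argument on $S^{d-1}$ picks up an undesired $\sqrt{d/N}$ factor rather than the dimension-free $\sqrt{\mathbf{r}(\Sigma)/N}$, so one genuinely needs either generic chaining driven by the covariance-weighted metric, or an equivalent moment/symmetrization approach controlling $\E\sup_t |N^{-1}\sum Z_i(t)|$ through Gaussian comparison and then transferring to a high-probability bound via a Bernstein-type concentration inequality for suprema of unbounded empirical processes. Either route crucially exploits the $\psi_2$-$L_2$ equivalence \eqref{eq:momentcomparison}, not merely the weaker notion \eqref{subgaussc}, since the effective-rank bound requires the metric to be driven by $\Sigma$ itself.
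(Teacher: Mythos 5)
The paper does not prove Theorem~\ref{thm:Kolt}; it is imported verbatim from \cite{Koltch17} (Theorem 9 there), and the closest thing to a proof inside this paper is the expectation bound the authors invoke in Section~\ref{truncestlemmaproof}, namely Corollary~1.9 of \cite{Mendelson07}, which gives
$\E\sup_{u\in S^{d-1}}\bigl|N^{-1}\sum_i\inr{\tilde X_i,u}^2-\E\inr{\tilde X,u}^2\bigr|\lesssim \mathcal{D}\,\gamma_2/\sqrt N+\gamma_2^2/N$
followed by Talagrand's majorizing measures to bound $\gamma_2(S^{d-1},\psi_2)\lesssim L\sqrt{\mathrm{Tr}(\Sigma)}$. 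Your proposal follows the same template (reduce to a quadratic empirical process on the sphere, chain with the $\Sigma^{1/2}$-weighted metric, control $\gamma_2$ via the Gaussian mean width $\E\|\Sigma^{1/2}g\|_2=\sqrt{\mathrm{Tr}(\Sigma)}$) and the final four-term bound you display is correct, including the observation that the $\psi_2$--$L_2$ equivalence of Definition~\ref{def:psi-2} (and not merely \eqref{subgaussc}) is what makes the covariance-weighted metric the right one.

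The one genuine soft spot is the step where you produce the $\gamma_2^2/N$ term. You attribute it to ``Dirksen/Talagrand-style Bernstein chaining,'' but a generic Bernstein chaining theorem for a process with mixed $\psi_2/\psi_1$ increments yields $\gamma_2(S^{d-1},d_2)/\sqrt N+\gamma_1(S^{d-1},d_1)/N$, and since (after normalizing $\|\Sigma\|=1$) both your $d_1$ and $d_2$ collapse to the same metric $\|\Sigma^{1/2}(s-t)\|_2$, the deviation term you would actually obtain is $\gamma_1(S^{d-1},\|\Sigma^{1/2}\cdot\|_2)/N$, not $\gamma_2^2/N=\mathrm{Tr}(\Sigma)/N$. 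These are not comparable: for a spectrum $\lambda_n=1/n$, $n\le k$, one has $\mathrm{Tr}(\Sigma)\sim\log k$ while $\gamma_1$ of the corresponding ellipsoid grows polynomially in $k$. Replacing $\gamma_1/N$ by $\gamma_2^2/N$ is exactly where the quadratic (chaos) structure of the process must be exploited --- via decoupling and the Mendelson--Paouris--Tomczak or Krahmer--Mendelson--Rauhut chaos bounds, which is precisely what \cite{Mendelson07} and \cite{Koltch17} do. So your final formula is right, but your justification, taken literally, proves a weaker statement; you should cite a chaos-specific tail bound (e.g.\ the chaos sections of Dirksen's paper, or KMR) rather than a generic Bernstein chaining theorem.
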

It was also shown in \cite{Koltch17} that if $G$ is a zero mean gaussian vector (and in particular it satisfies the conditions of Theorem \ref{thm:Kolt}) with covariance $\Sigma$  then
\[
\E\left\|\frac{1}{N}\sum_{i=1}^N  G_i \otimes G_i - \Sigma\right\| \gtrsim \|\Sigma\|\max\left\{\sqrt{\frac{\mathbf{r}(\Sigma)}{N}}, \frac{\mathbf{r}(\Sigma)}{N}\right\}.
\]
Hence, there is no room for improvement in the deviation estimate of the empirical mean from the true one at the constant confidence level. Of course, that does not imply that the empirical mean is an optimal covariance estimator --- even for a gaussian vector, let alone for a general subgaussian random vector. In fact, as we explain in what follows, there are far better covariance estimators than \eqref{eq:subgauss} when the confidence parameter $\delta$ is small.

\vskip0.3cm
Just as in the one-dimensional mean-estimation problem, once the problem is more `heavy-tailed' the performance of the empirical mean deteriorates quickly and a different procedure has to be used. And that is also the case for covariance estimation. The current state-of-the-art for covariance estimation in heavy-tailed situation is \cite{Minsk18} (see Corollary 4.1 there and similar results in \cite{Minsk18a, Minsk17a}), in which $X$ is assumed to satisfy an $L_4-L_2$ norm equivalence.
\begin{Definition} \label{def:L-4-L-2}
A random vector $X$ with mean $\mu$ satisfies the $L_4-L_2$ norm equivalence with a constant $L \ge 1$ if for every $t \in \R^d$,
$$
(\E \inr{X-\mu,t}^4)^{\frac{1}{4}} \leq L (\E \inr{X-\mu,t}^2)^{\frac{1}{2}}.
$$
\end{Definition}

Note that if $X$ is $L$-subgaussian then it satisfies an $L_4-L_2$ norm equivalence with constant $2L$. At the same time, for an $L_4-L_2$ equivalence the linear forms $\inr{X,t}$ need not have higher moments that the fourth one; in particular, $X$ need not be $L$-subgaussian. Another formulation of the same condition is that for any direction, the \emph{kurtosis}\footnote{The kurtosis of the random variable $Y$ is equal to $\frac{\E(Y - \E Y)^4}{(\E(Y - \E Y)^2)^2}$.} of the corresponding one-dimensional marginal is bounded by $L$.

\begin{Remark}
In Appendix \ref{app:equiv}  one can find two examples that demonstrate the difference between a random vector $X$ being $L$-subgaussian (which implies an $\psi_2-L_2$ norm equivalence of the centred marginals of $X$) and $X$ satisfying an $L_4-L_2$ norm equivalence.
\end{Remark}

The current state of the art estimate for random vectors that satisfy Definition \ref{def:L-4-L-2} is as follows:
\begin{Theorem}[\cite{Minsk18}] \label{thm:L-4-L-2}
For every $L \geq 1$ there are constants $c(L)$ and $c^\prime(L)$ that depend only on $L$ and for which the following holds. Let $X$ satisfy an $L_4-L_2$ norm equivalence with constant $L$.
For $0<\delta<1$ there is an estimator $\tilde{\Sigma}_\delta$ that satisfies
\begin{equation} \label{eq:L-4-L-2}
\|\tilde{\Sigma}_\delta - \Sigma\| \leq c(L) \|\Sigma\|\sqrt{\frac{\mathbf{r}(\Sigma)}{N} \cdot \left(\log d + \log(1/\delta)\right) }
\end{equation}
with probability at least $1-\delta$, provided that $N \geq c^{\prime}(L) \mathbf{r}(\Sigma)(\log d + \log(1/\delta))$.
\end{Theorem}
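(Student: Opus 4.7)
I would replace the empirical mean of $W_i := X_i\otimes X_i$ by a Catoni--Minsker shrinkage estimator. Pick an odd non-decreasing $\psi:\R\to\R$ satisfying the Catoni twin bounds
$$-\log(1-x+x^2/2) \;\le\; \psi(x) \;\le\; \log(1+x+x^2/2), \qquad x\in\R,$$
and extend $\psi$ to self-adjoint matrices via the spectral calculus. For a tuning parameter $\theta>0$ to be chosen, set
$$\tilde{\Sigma}_\delta \;=\; \frac{1}{\theta N}\sum_{i=1}^{N} \psi\bigl(\theta\, X_i \otimes X_i\bigr).$$

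The pointwise bound $|\psi(x)-x|\le x^2/2$ yields $\|\E\tilde{\Sigma}_\delta - \Sigma\| \le \tfrac{\theta}{2}\,\|\E W^2\|$. Since $W^2=\|X\|^2\,X\otimes X$, the $L_4$--$L_2$ equivalence combined with Cauchy--Schwarz in the coordinates gives, for any unit vector $t$,
$$\langle \E W^2\, t,\, t\rangle \;=\; \sum_j \E\bigl[X_j^2 \langle X,t\rangle^2\bigr] \;\le\; \sum_j (L^2\Sigma_{jj})\cdot (L^2\|\Sigma\|) \;=\; L^4\,\|\Sigma\|^2\,\mathbf{r}(\Sigma).$$
For the deviation, the matrix Chernoff / Ahlswede--Winter bound $\PROB\{\lambda_{\max}(A)>t\}\le e^{-t}\,\mathrm{Tr}\,\E\exp(A)$ applied to $\pm\bigl(\sum_i \psi(\theta W_i) - N\,\E\psi(\theta W)\bigr)$, together with the matrix analogue of the Catoni MGF inequality
$$\log \E\exp\bigl(\psi(\theta W)\bigr) \;\preceq\; \theta\,\Sigma + \theta^2\,\E W^2,$$
produces $\|\tilde{\Sigma}_\delta - \E\tilde{\Sigma}_\delta\| \lesssim \theta\,L^4\|\Sigma\|^2\mathbf{r}(\Sigma) + \tfrac{\log(d/\delta)}{\theta N}$. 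Balancing the bias against this deviation by the choice $\theta \asymp \|\Sigma\|^{-1}\sqrt{\log(d/\delta)\,/\,(L^4\,\mathbf{r}(\Sigma)\,N)}$ then yields the stated
$$\|\tilde{\Sigma}_\delta - \Sigma\| \;\lesssim_L\; \|\Sigma\|\sqrt{\tfrac{\mathbf{r}(\Sigma)(\log d + \log(1/\delta))}{N}}.$$
The sample-size hypothesis $N\gtrsim_L \mathbf{r}(\Sigma)(\log d+\log(1/\delta))$ is precisely what is needed to keep $\theta\|W\|$ inside the regime where the Catoni twin bounds are effective and, simultaneously, to keep the ratio under the square root at most $1$.

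The main technical obstacle is the \emph{matrix} Catoni MGF inequality, namely establishing $\log \E\exp(\psi(\theta W)) \preceq \theta\,\Sigma + \theta^2\E W^2$ in the semidefinite order despite $\psi$ being nonlinear; the standard route is to combine the pointwise scalar Catoni bound, applied eigenvalue-wise, with a Lieb-type operator-concavity argument analogous to the proof of matrix Bernstein. A secondary practical issue is that the optimal $\theta$ depends on the unknown quantities $\|\Sigma\|$ and $\mathbf{r}(\Sigma)$; this can be resolved either by plugging in an a priori upper bound on $\mathrm{Tr}(\Sigma)=\|\Sigma\|\,\mathbf{r}(\Sigma)$ (available cheaply from the empirical trace) or by an adaptive Lepski-type choice of $\theta$ on a geometric grid followed by a union bound.
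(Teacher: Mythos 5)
This theorem is quoted from \cite{Minsk18} (its Corollary 4.1) and is not re-proved in the present paper, so there is no internal proof to compare against; your sketch is, however, a faithful reconstruction of Minsker's argument --- the spectral-calculus Catoni shrinkage $\tfrac{1}{N\theta}\sum_i\psi(\theta X_i\otimes X_i)$, the operator MGF bound, the Lieb/Tropp master-bound step, and the variance-proxy estimate $\|\E(X\otimes X)^2\|\le L^4\|\Sigma\|^2\,\mathbf{r}(\Sigma)$, which is precisely Lemma 4.1 of \cite{Minsk18} and which the present paper also invokes inside the proof of Lemma \ref{truncestlemma}.

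Two small clarifications. The single-matrix inequality $\log\E\exp(\psi(\theta W))\preceq\theta\Sigma+\tfrac{\theta^2}{2}\E W^2$ is actually the easy part and needs no operator-concavity: since $W=X\otimes X\succeq 0$ and $\psi,\exp$ are both evaluated in the spectral calculus of the same $W$, the scalar bound $e^{\psi(\theta\lambda)}\le 1+\theta\lambda+\tfrac{\theta^2\lambda^2}{2}$ applies eigenvalue-wise to give $\exp(\psi(\theta W))\preceq I+\theta W+\tfrac{\theta^2}{2}W^2$; take expectations, then use operator monotonicity of $\log$ together with $\log(1+u)\le u$. The genuinely nontrivial step is the Tropp/Lieb master bound that lets you combine the $N$ independent, mutually non-commuting summands, so I would swap your assessment of which part is the ``main obstacle.'' Also, the Catoni twin bounds you wrote hold for all $x\in\R$, so there is no regime constraint on $\theta\|W\|$; the sample-size hypothesis arises only because the optimal $\theta$ depends on the unknown $\|\Sigma\|$ and $\mathbf{r}(\Sigma)$ and because the resulting error must be at most of order $\|\Sigma\|$. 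That dependence of $\theta$ on unknown quantities is exactly the deficiency the paper flags in the remark immediately following the theorem, and it is one of the motivations for the three-stage data-splitting construction developed in the paper as an alternative to the Lepski-type tuning you propose at the end.
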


\begin{Remark}
Let us mention that the procedure from \cite{Minsk18} requires prior information on the values of $\|\Sigma\|$ and ${\mathbf{r}(\Sigma)}$ up to some absolute multiplicative constant---an assumption we shall return to in what follows. In fact, a significant part of our analysis is devoted to obtaining estimates on these parameters, and our approach is an alternative to Lepski's method used in \cite{Minsk17a, Minsk18}.
\end{Remark}

Observe that if $\delta$ is smaller than $1/d$, the error guaranteed by Theorem \ref{thm:L-4-L-2} is of the order of
\begin{equation} \label{eq:error-L-4-L-2}
\|\Sigma\| \sqrt{\frac{\mathbf{r}(\Sigma)}{N}} \sqrt{\log(1/\delta)},
\end{equation}
which turns out to be far from optimal as we now explain. 

\vskip0.3cm

To put \eqref{eq:error-L-4-L-2} in some perspective, let us examine possible benchmarks for general mean estimation problems and see how those compare with \eqref{eq:subgauss}, \eqref{eq:L-4-L-2} and \eqref{eq:error-L-4-L-2} when applied to covariance estimation.

\subsection{Optimality in mean estimation}
Let $W$ be a random vector with mean $\mu$ and set $\vertiii{ \ }$ to be an arbitrary norm. Let  $B^{\circ}$ be the unit ball of the dual norm to $\vertiii{ \ }$, and denote by $\hat{\mu}$ a mean-estimator constructed using an independent sample $W_1,...,W_N$. As it happens, a lower bound on the performance of $\hat{\mu}$ is
\begin{equation} \label{eq:lower-weak}
\frac{R}{\sqrt{N}} \sqrt{\log(1/\delta)}
\end{equation}
where
\begin{equation} \label{eq:R}
R=\sup_{x^* \in B^\circ} \left(\E (x^*(W-\mu))^2\right)^{\frac{1}{2}}.
\end{equation}
Indeed, for every $x^* \in B^{\circ}$
$$
\vertiii{\hat{\mu}-\mu} \geq |x^*(\hat{\mu}-\mu)|=|x^*(\hat{\mu})-x^*(\mu)|;
$$
therefore, if there is a procedure for which $\vertiii{\hat{\mu}-\mu} \leq \eps$ with probability $1-\delta$, then on the same event the procedure automatically performs with accuracy $\eps$ and confidence $1-\delta$ for each one of the real-valued mean-estimation problems associated with the random variables $x^*(W)$, $x^* \in B^\circ$. By a lower bound (Proposition 6.1 from \cite{Catoni12}) on real-valued mean estimation problems when $W$ is a gaussian vector, the best possible mean-estimation error for each $x^*(W)$ is
$$
\sqrt{\frac{{\rm var}\bigl(x^*(W)\bigr)}{N}}\sqrt{\log(1/\delta)},
$$
and taking the `worst' $x^* \in B^\circ$ leads to \eqref{eq:lower-weak}.
\vskip0.3cm
Although \eqref{eq:lower-weak} is part of the story, it is unlikely it is the whole story. Intuitively, \eqref{eq:lower-weak} takes into account the effect of one-dimensional marginals of $W$ rather than the entire geometry of the distribution. It stands to reason that an additional `global' parameter is called for---one that reflects the entire structure of $W$ and the geometry of the norm. Moreover, that parameter should reflect the difficulty of the estimation problem at the constant confidence level.

To give an example of such a result, a (sharp) lower bound from \cite{Catoni12} on the mean estimation problem when $W$ is a gaussian random vector is the following: if $\vertiii{\hat{\mu}-\mu} \leq \eps$ with probability at least $1-\delta$ then
\begin{equation} \label{eq:lower-gaussian}
\eps \geq \frac{c}{\sqrt{N}} \left(\E\vertiii{W-\mu} + R \sqrt{\log(1/\delta)}\right);
\end{equation}
hence, the `global parameter' in the gaussian case is just the mean $\E\vertiii{W-\mu}$.

Let us examine \eqref{eq:lower-gaussian} more carefully, in the hope that it would lead us towards the right answer for general random vectors. Note that by setting $\delta=\exp(-p)$, the gaussian random variable $W$ satisfies that
$$
\sqrt{\log(1/\delta)} (\E (x^*(W-\mu))^2)^{\frac{1}{2}} \sim \sqrt{p}(\E (x^*(W-\mu))^2)^{\frac{1}{2}} \sim (\E |x^*(W-\mu)|^p)^{\frac{1}{p}}.
$$
At the same time, the \emph{strong-weak norm inequality\footnote{By `strong norm' we mean the $L_1$ norm of $\vertiii{W-\mu}$, while the `weak norm' is just the largest $L_p$ norm of a marginal $x^*(W-\mu)$ for $x^* \in B^{\circ}$.}} for gaussian vectors (see, e.g., \cite{Latala08}) implies that
\begin{align*}
\left( \E \vertiii{\frac{1}{N}\sum_{i=1}^N W_i - \mu}^p \right)^{\frac{1}{p}} \leq & \E \vertiii{\frac{1}{N}\sum_{i=1}^N W_i - \mu} + c  \sup_{x^* \in B^{\circ}} \Bigl(\E \bigl| x^*\bigl(\frac{1}{N}\sum_{i=1}^N W_i - \mu\bigr)\bigr|^p \Bigr)^{\frac{1}{p}}
\\
= & \frac{1}{\sqrt{N}} \Bigl(\E \vertiii{W - \mu} + c \sup_{x^* \in B^{\circ}} \bigl(\E | x^*(W - \mu)|^p \bigr)^{\frac{1}{p}}\Bigr),
\\
= & \frac{1}{\sqrt{N}} \Bigl(\E \vertiii{W - \mu} + c^\prime \sqrt{p}\sup_{x^* \in B^{\circ}} \bigl(\E | x^*(W - \mu)|^2 \bigr)^{\frac{1}{2}}\Bigr),
\end{align*}
where $c$ and $c^\prime$ are absolute constants. Thus, the lower bound of \eqref{eq:lower-gaussian} implies that the best possible performance of a mean estimator of a gaussian vector matches a strong-weak norm inequality.  To see that these bounds are of the same order, one needs to use Markov's inequality and optimize with respect to $p$, where the right choice is indeed $p \sim \log(1/\delta)$.

This leads to a natural conjecture: that the best possible performance in a general mean estimation problem is given by a gaussian-like strong-weak norm inequality, and that there is a procedure that performs with that accuracy/confidence tradeoff.

Recently, a general mean estimation procedure was introduced in \cite{Mendelson18} that exhibits this type of a ``strong-weak" behaviour. To formulate the result, let $W$ be an arbitrary random vector taking values in $\R^d$ and with mean $\mu$, let $G$ be the zero mean gaussian random vector with the same covariance as $W$ and set
$$
Y_N =   \frac{1}{N}\sum_{i=1}^N (W_i - \mu),
$$
where $W_1,...,W_N$ are independent copies of $W$. Let $\vertiii{ \ }$ be a norm, set $B^\circ$ to be the unit ball of the dual norm, and put
$$
R=\sup_{x^* \in B^\circ} \left(\E (x^*(W-\mu))^2\right)^{\frac{1}{2}}.
$$

\begin{Theorem} \label{thm:LM-mod} \cite{Mendelson18}
For $0<\delta <1 $ there is a procedure $\tilde{\mu}_\delta$ such that
$$
\vertiii{\tilde{\mu}_\delta - \mu} \leq c \max \left\{\E\vertiii{Y_N}, \ \ \frac{\E \vertiii{G}}{\sqrt{N}} + \frac{R}{\sqrt{N}}\sqrt{\log(1/\delta)}\right\}.
$$

The mean estimation procedure is defined as follows: let $T={\rm ext}(B^{\circ})$ to be the set of extreme points in $B^{\circ}$.
\begin{framed}
\begin{itemize}
\item For the wanted confidence parameter  $0<\delta<1$, let $n=\log(1/\delta)$ and set $m=N/n$.
\item Let $(I_j)_{j=1}^n$ be the natural partition of $\{1,...,N\}$ to blocks of cardinality $m$ and given a sample $W_1,...,W_N$ set $Z_j=\frac{1}{m}\sum_{i \in I_j} W_j$.
\item For $x^* \in T$ and $\eps>0$, set
$$
S_{x^*}(\eps) = \left\{y \in \R^d \ : \ \left|x^*(Y) - x^*(Z_j) \right| \leq \eps \ \text{for more than} \ n/2 \ \text{blocks} \right\},
$$
and define
$$
S(\eps) = \bigcap_{x^* \in T} S_{x^*}(\eps).
$$

\item Set $\eps_0 = \inf \{\eps>0: S(\eps) \not=\emptyset\}$, and let $\tilde{\mu}_\delta$ be any vector in $\bigcap_{\eps>\eps_0} S(\eps)$.
\end{itemize}
\end{framed}
\end{Theorem}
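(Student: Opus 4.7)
My plan is to prove this in two stages: a deterministic geometric reduction showing that if $\mu\in S(r)$ then $\vertiii{\tilde{\mu}_\delta-\mu}\le 2r$, and a probabilistic argument showing that $\mu\in S(r)$ with probability at least $1-\delta$ for $r$ of the stated order.

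\textbf{Geometric step.} Fix $r = c\max\{\E\vertiii{Y_N},\ \E\vertiii{G}/\sqrt{N}+(R/\sqrt{N})\sqrt{\log(1/\delta)}\}$. If $\mu\in S(r)$, then $\eps_0\le r$, so $\tilde{\mu}_\delta\in S(r+\eta)$ for every $\eta>0$. For each $x^*\in T$ the set of blocks on which $|x^*(\mu)-x^*(Z_j)|\le r$ has size $>n/2$, and likewise the set of blocks on which $|x^*(\tilde{\mu}_\delta)-x^*(Z_j)|\le r+\eta$ has size $>n/2$. By pigeonhole these two sets intersect, so there is a block $j$ for which the triangle inequality yields $|x^*(\tilde{\mu}_\delta-\mu)|\le 2r+\eta$. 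Taking the supremum over $T$ (which generates the dual ball of $\vertiii{\cdot}$) and letting $\eta\to 0$ gives $\vertiii{\tilde{\mu}_\delta-\mu}\le 2r$.

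\textbf{Probabilistic step.} It remains to verify
$$\sup_{x^*\in T}\sum_{j=1}^n\IND\bigl\{|x^*(Z_j-\mu)|>r\bigr\} < \tfrac{n}{2}\qquad\text{with probability }\ge 1-\delta.$$
For fixed $x^*$, Chebyshev applied to the block mean $Z_j$ gives $\Pr(|x^*(Z_j-\mu)|>r)\le R^2/(mr^2)$ with $m=N/n$; the choice $r\gtrsim R\sqrt{\log(1/\delta)/N}$ makes this a small constant, so the expected number of bad blocks (for any one $x^*$) is at most $n/8$. To lift this pointwise control to a uniform statement over $T$, dominate the indicator by a $(2/r)$-Lipschitz surrogate $\psi$ with $\psi(0)=0$ and apply Talagrand's concentration inequality to $\sup_{x^*\in T}\sum_j\psi(x^*(Z_j-\mu))$. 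Symmetrization followed by the Ledoux--Talagrand contraction principle reduces the expected supremum to
$$\E\sum_j \psi(x^*(Z_j-\mu)) + \frac{C}{r}\cdot\E\vertiii{\sum_{j=1}^n\epsilon_j(Z_j-\mu)},$$
using the identity $\sup_{x^*\in T}|x^*(V)|=\vertiii{V}$. The variance input to Talagrand's bound is controlled directly by the same Chebyshev estimate and contributes only the weak $R\sqrt{\log(1/\delta)/N}$ piece.

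\textbf{Rademacher process and main obstacle.} The quantity $\E\vertiii{\sum_j\epsilon_j(Z_j-\mu)}$ is bounded in two complementary ways. A desymmetrization against the i.i.d.\ block structure gives a bound of order $N\cdot\E\vertiii{Y_N}$, which (after dividing by $n$ in the uniform-deviation condition $C/r\cdot(\cdot)\ll n$) forces $r\gtrsim \E\vertiii{Y_N}$. Alternatively, one replaces the Rademacher signs by gaussians and exploits that each $Z_j-\mu$ has the same covariance as $G/\sqrt{m}$; a gaussian comparison for the induced process then contributes the $\E\vertiii{G}/\sqrt{N}$ term. Combining these two estimates of the Rademacher process with the Chebyshev/variance piece yields the claimed value of $r$. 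The main obstacle is precisely this decoupling: the contraction and concentration steps must be organized so that the three pieces $\E\vertiii{Y_N}$, $\E\vertiii{G}/\sqrt{N}$ and $R\sqrt{\log(1/\delta)/N}$ appear additively, with no spurious factor of $R$ or $\sqrt{\log(1/\delta)}$ multiplying the expected-norm terms---this is what makes the procedure genuinely ``strong--weak'' rather than a crude combination of sub-Gaussian concentration with a complexity parameter.
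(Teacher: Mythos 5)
The theorem you are asked to prove is \emph{not} proved in this paper: it is Theorem~\ref{thm:LM-mod}, quoted from \cite{Mendelson18} (Lugosi--Mendelson) and used as a black box. There is therefore no internal proof to compare your attempt against; what I can do is assess the sketch on its own terms.

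Your \emph{geometric step} is correct and is exactly the standard median-of-means pigeonhole argument: if $\mu\in S(r)$ then $\eps_0\le r$, hence $\tilde\mu_\delta\in S(r+\eta)$ for all $\eta>0$; for each $x^*\in T$ the two majorities overlap, the triangle inequality gives $|x^*(\tilde\mu_\delta-\mu)|\le 2r+\eta$, and since $T=\mathrm{ext}(B^\circ)$ generates $B^\circ$ the sup recovers the norm.

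The \emph{probabilistic step} has the right shape (Chebyshev per block, a Lipschitz surrogate for the indicator, Talagrand concentration, symmetrization plus contraction), and the two pieces $\E\vertiii{Y_N}$ and $(R/\sqrt N)\sqrt{\log(1/\delta)}$ are extracted correctly: the desymmetrization bound $\E\vertiii{\sum_j\epsilon_j(Z_j-\mu)}\le 2n\,\E\vertiii{Y_N}$ and the variance input $\E\psi(x^*(Z_j-\mu))^2\lesssim R^2/(m r^2)$ are both sound, and they do force $r\gtrsim\E\vertiii{Y_N}$ and $r\gtrsim (R/\sqrt N)\sqrt{\log(1/\delta)}$ respectively.

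The genuine gap is the $\E\vertiii{G}/\sqrt N$ term. You propose to "replace the Rademacher signs by gaussians" and invoke a "gaussian comparison." But conditional on the data, $\sum_j g_j(Z_j-\mu)$ is a gaussian vector whose covariance is the \emph{random, data-dependent} matrix $\sum_j(Z_j-\mu)\otimes(Z_j-\mu)$, not the deterministic matrix $(n/m)\,\Sigma$. Gaussian comparison principles (Slepian, Sudakov--Fernique, Gordon) require a pointwise covariance domination, and there is no event of high probability on which $\sum_j(Z_j-\mu)\otimes(Z_j-\mu)\preceq C\,(n/m)\Sigma$; indeed, if such a statement were available with only finite second moments, most of the difficulty of the problem would evaporate. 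The fact that the marginal covariances of the $Z_j-\mu$ equal $\Sigma/m$ does not transfer to the conditional process. Extracting the $\E\vertiii{G}/\sqrt N$ term is precisely the hard part of \cite{Mendelson18}; it is obtained there via a substantially more delicate stratified analysis of the empirical process (and is the content that upgrades their weaker Theorem, which only gives the $\E\vertiii{Y_N}$ and confidence terms, to the full strong--weak form stated here). As written, your sketch establishes the bound with $\E\vertiii{Y_N}$ in place of the second entry of the max, but not the gaussian refinement.

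One further, minor, point: the theorem statement in the paper omits the qualifier "with probability at least $1-\delta$"; your proof correctly supplies it, which is what the claim must mean.
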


The main result of this note (which is formulated in the next section), is that the right application of Theorem \ref{thm:LM-mod} leads to an (almost) optimal covariance estimator: the procedure performs as if $X$ were a gaussian vector even if $X$ only satisfies an $L_4-L_2$ norm equivalence, and the accuracy/confidence tradeoff obeys the strong-weak inequality one would expect.

\subsection{From mean estimation to covariance estimation}

In what follows, we assume without loss of generality that $X$ is symmetric and zero mean. We may do so because if $X^\prime$ is an independent copy of $X$ then $Z=(X-X^\prime)/\sqrt{2}$ is symmetric and has the same covariance as $X$. It also satisfies an $L_4-L_2$ norm equivalence if $X$ does. Thus, given a random sample $X_1, \ldots, X_N$ sampled independently according to $X$ one may consider the sample
\[
\frac{1}{\sqrt{2}}(X_1 - X_2), \ldots, \frac{1}{\sqrt{2}}(X_{N - 1} - X_N),
\]
consisting of $N/2$ independent copies of $Z$, and perform the procedure with respect to that sample. 

The natural choice of a random vector in Theorem \ref{thm:LM-mod} is $W = X \otimes X$, but as it happens, a better alternative is to use a truncated version of $X$ instead of the original one:
\begin{Definition} \label{def:truncation}
Let
$$
\beta = \left(\frac{{\rm Tr}(\Sigma)\|\Sigma\|N}{\gamma}\right)^{\frac{1}{4}},
$$
and let
$$
\tilde{X} = X \IND_{\{\|X\|_2 \leq \beta\}}.
$$
In the $L$-subgaussian case set $\gamma = 1$  and when $X$ only satisfies $L_4-L_2$ norm equivalence, let $\gamma=\log {\mathbf{r}}(\Sigma)$. Also denote $\tilde{\Sigma} = \E(\tilde{X} \otimes \tilde{X})$.
\end{Definition}

\begin{Definition} \label{weakvar}
Given the random vector $X$ taking its values in $\mathbb{R}^d$ define
\begin{equation} \label{rsquared}
R^2_{X} = \sup_{u,v \in S^{d-1}} \E \left(v^T(X \otimes X - \E X \otimes X)u\right)^2,
\end{equation}
\end{Definition}

The quantity $R^2_X$ is sometimes referred to as the \emph{weak variance} of a random matrix. 

\vskip0.3cm
As was mentioned previously, the main result of this note is the existence of an estimator whose performance improves both \eqref{eq:subgauss} and \eqref{eq:L-4-L-2} and is an optimal (or very close to being optimal) covariance estimation procedure.

\vskip0.3cm
The estimator is constructed in three stages: the first stage leads to a data-dependent estimate on ${\rm Tr}(\Sigma)$; the second stage is based on the estimated value of  ${\rm Tr}(\Sigma)$ established in the first stage and its outcome is a data-dependent estimate on the value of $\|\Sigma\|$; the last stage receives as input the results of two first stages and the third part of the sample and returns the wanted estimator of $\Sigma$. A key point in the analysis of this procedure is that one only needs to estimate ${\rm Tr}(\Sigma)$ and $\|\Sigma\|$ up to absolute multiplicative constant factors and that simplifies the problem considerably.

The performance of the procedure is summarized in this, our main result.
\begin{Theorem} \label{thm:main}
Let $X$ be a zero mean random vector with (an unknown) covariance matrix $\Sigma$ and let $\| \ \|$ be its operator norm. Using the notation of Definition \ref{def:truncation} and Definition \ref{weakvar}, for any $0<\delta<1$, there is a procedure that receives as data the sample $X_1,...,X_N$, returns a matrix $\hat{\Sigma}_\delta$ and satisfies:
\begin{description}
\item{(1)} If $X$ is $L$-subgaussian and $N \geq c^{\prime}(L)(\mathbf{r}(\Sigma) + \log(1/\delta))$, then with probability at least $1-\delta$,
$$
\|\hat{\Sigma}_\delta-\Sigma\| \leq c(L) \left(\|\Sigma\| \sqrt{\frac{\mathbf{r}(\Sigma)}{N}} + \frac{R_{\tilde{X}}}{\sqrt{N}} \sqrt{\log(1/\delta)} \right);
$$
\item{(2)} If $X$ satisfies an $L_4-L_2$ norm equivalence and $N \geq c^\prime(L)(\mathbf{r}(\Sigma) \log \mathbf{r}(\Sigma) {+ \log(1/\delta)})$
then with probability at least $1-\delta$,
\begin{equation}
\label{seceq}
\|\hat{\Sigma}_\delta-\Sigma\| \leq c(L) \left( \|\Sigma\|\sqrt{\frac{\mathbf{r}(\Sigma) \log (\mathbf{r}(\Sigma))}{N}}+\frac{R_{\tilde{X}}}{\sqrt{N}} \sqrt{\log(1/\delta)} \right).
\end{equation}
\end{description}
In both cases  $R_{\tilde{X}} \leq c(L)\|\Sigma\|$ and $c(L), c^\prime(L)$ are constants that depend only on $L$.
\end{Theorem}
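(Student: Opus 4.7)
The plan is to invoke the general mean estimation procedure of Theorem \ref{thm:LM-mod} on the random matrices $W_i = \tilde X_i \otimes \tilde X_i$, viewed as elements of the space of symmetric $d\times d$ matrices equipped with the operator norm. The extreme points of the dual (nuclear-norm) ball are the rank-one matrices $uv^T$ with $u,v \in S^{d-1}$, so Theorem \ref{thm:LM-mod} returns a matrix $\hat\Sigma_\delta$ for which $\|\hat\Sigma_\delta - \tilde\Sigma\|$ is controlled by three quantities: the empirical deviation $\E\|Y_N\|$, the gaussian term $\E\|G\|/\sqrt N$, and the weak variance $R_{\tilde X}$. The final error then follows by the triangle inequality together with the truncation bias $\|\tilde\Sigma - \Sigma\|$; the role of the truncation at level $\beta$ is precisely to make the heavy-tailed matrices $X_i\otimes X_i$ amenable to matrix concentration while keeping this bias under control.

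For the bias, any $v \in S^{d-1}$ satisfies, by Cauchy--Schwarz and Markov,
\[
|v^T(\Sigma - \tilde\Sigma)v| = \E \langle X,v\rangle^2 \IND_{\{\|X\|_2 > \beta\}} \leq \bigl(\E \langle X,v\rangle^4\bigr)^{1/2}\bigl(\E \|X\|_2^4/\beta^4\bigr)^{1/2};
\]
the $L_4$-$L_2$ hypothesis gives $\E\langle X,v\rangle^4 \leq L^4\|\Sigma\|^2$ and, expanding $\|X\|_2^4 = (\sum_i X_i^2)^2$ and applying $L_4$-$L_2$ coordinate-wise, also $\E\|X\|_2^4 \leq L^4 ({\rm Tr}(\Sigma))^2$. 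Plugging in $\beta^4 = {\rm Tr}(\Sigma)\|\Sigma\|N/\gamma$ yields $\|\tilde\Sigma - \Sigma\| \leq c(L)\|\Sigma\|\sqrt{\mathbf{r}(\Sigma)\gamma/N}$, matching the first term in the claimed rate with $\gamma = 1$ (subgaussian case) and $\gamma = \log \mathbf{r}(\Sigma)$ ($L_4$-$L_2$ case). The same $L_4$-$L_2$ estimate yields $R_{\tilde X} \leq c(L)\|\Sigma\|$. The remaining inputs $\E\|Y_N\|$ and $\E\|G\|/\sqrt N$ are the principal analytic tasks: in the subgaussian case both are of order $\|\Sigma\|\sqrt{\mathbf{r}(\Sigma)/N}$ by Koltchinskii--Lounici type estimates and the standard bound on the operator norm of a gaussian matrix in terms of the effective rank; in the $L_4$-$L_2$ case, the deterministic bound $\|\tilde X_i \otimes \tilde X_i\| \leq \beta^2$ together with the intrinsic-dimension form of matrix Bernstein gives $\E\|Y_N\| \leq c(L)\|\Sigma\|\sqrt{\mathbf{r}(\Sigma)\log \mathbf{r}(\Sigma)/N}$, which matches the target and justifies the choice $\gamma = \log\mathbf{r}(\Sigma)$.

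The remaining issue is that $\beta$ depends on the unknown ${\rm Tr}(\Sigma)$ and $\|\Sigma\|$, and for this I would split the sample into three parts and run the three-stage construction announced in the introduction. Stage~1 estimates ${\rm Tr}(\Sigma) = \E\|X\|_2^2$ to constant factor accuracy by a scalar median-of-means estimator applied to $\|X_i\|_2^2$; the $L_4$-$L_2$ bound on $\E\|X\|_2^4$ above ensures that the variance of $\|X\|_2^2$ is small enough for this to succeed with the stated sample complexity. Stage~2 uses the output of Stage~1 to fix a preliminary $\beta$ and then produces a constant-factor estimate of $\|\Sigma\|$, replacing the Lepski-type device of \cite{Minsk18}. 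Stage~3 applies Theorem \ref{thm:LM-mod} on the last block of the sample with $\beta$ computed from the first two stages. I expect the main obstacle to be the tight analysis of $\E\|Y_N\|$ for the truncated matrices in the heavy-tailed regime---in particular, showing that the matrix Bernstein step loses only a $\sqrt{\log \mathbf{r}(\Sigma)}$ factor rather than a $\sqrt{\log d}$ factor---together with the bookkeeping that the constant-factor accuracy of Stages~1--2, on events of sufficiently high probability, suffices to make the sharp concentration step of Stage~3 deliver the stated confidence.
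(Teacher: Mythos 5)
Your proposal follows essentially the same route as the paper: apply Theorem~\ref{thm:LM-mod} to the truncated matrices $\tilde X_i\otimes\tilde X_i$, control the truncation bias $\|\tilde\Sigma-\Sigma\|$ by Cauchy--Schwarz plus Markov plus the $L_4$-$L_2$ hypothesis, bound $\E\|Y_N\|$ by a quadratic empirical-process estimate in the subgaussian case and by the intrinsic-dimension matrix Bernstein inequality in the $L_4$-$L_2$ case, and run a three-stage procedure (median-of-means for ${\rm Tr}(\Sigma)$, then a preliminary $\hat\Sigma_{\delta,\alpha}$ to estimate $\|\Sigma\|$, then the final estimator at the correct $\beta$) to handle the unknown $\beta$. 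The one place your sketch glosses over a genuine point is the term $\E\|G\|/\sqrt N$: the $G$ of Theorem~\ref{thm:LM-mod} is the centered gaussian with the same covariance as $\tilde X\otimes\tilde X-\tilde\Sigma$, which is not a standard Wishart-type gaussian matrix, so ``the standard bound on the operator norm of a gaussian matrix in terms of the effective rank'' does not apply off the shelf. The paper sidesteps this by proving $\E\|G\|\le\liminf_{N\to\infty}\sqrt N\,\E\|Y_N\|$ via the multivariate CLT, so the gaussian term is absorbed into the $\E\|Y_N\|$ estimate you already need; you should make this (or an equivalent) reduction explicit, since otherwise the gaussian term is not controlled. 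Beyond that, the remaining differences are ones of attribution (the paper invokes the generic-chaining bound for quadratic processes of Mendelson--Pajor--Tomczak-Jaegermann and Talagrand's majorizing-measures theorem rather than citing Koltchinskii--Lounici directly), not of substance.
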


\begin{Remark}
Note that the estimates in Theorem \ref{thm:main} do not depend on the dimension $d$; instead, they depend only on $ \mathbf{r}(\Sigma)$ which may be small even if $d$ tends to infinity. This is important in view of the recent results on covariance estimation in Banach spaces \cite{Koltch17}.
\end{Remark}

The estimate in Theorem \ref{thm:main} is actually a strong-weak norm inequality---as if $X$ were gaussian (up to the logarithmic term in \eqref{seceq}). Indeed, let $G$ be the zero mean gaussian random vector that has the same covariance as $X$
and set $N \geq \mathbf{r}(\Sigma)$. As noted previously,
$$
\|\Sigma\|\sqrt{\frac{\mathbf{r}(\Sigma)}{N}}  \sim  \E \left\|\frac{1}{N}\sum_{i=1}^N G_i \otimes G_i - \Sigma \right\|,
$$
with the left-hand side being the `strong term' from Theorem \ref{thm:main}. Moreover, the term involving $R_X$ is actually the natural weak term associated with the operator norm. Indeed, recall the well-known fact that the dual norm to the operator norm is the nuclear norm. And, since a linear functional $z$ acts on the matrix $x$ via trace duality---that is $z(x)=[z,x]:={\rm Tr}(z^Tx)$ --- it follows, for example, from \cite{so99} that the extreme points of the dual unit ball $B^\circ$ are
$$
\left\{ u \otimes v : u,v \in S^{d-1} \right\}.
$$
Thus,
\begin{equation*}
R^2_{\tilde{X}} = \sup_{x^* \in B^\circ}\E\bigl(x^*( \tilde{X}\otimes  \tilde{X} - \tilde{\Sigma})\bigr)^2 = \sup\limits_{u, v \in S^{d - 1}}\E \bigl(v^T ( \tilde{X} \otimes  \tilde{X} - \tilde{\Sigma})u\bigr)^2,
\end{equation*}
and in particular, by \eqref{eq:lower-weak} the weak term $(R_{\tilde{X}}/\sqrt{N}) \sqrt{\log(1/\delta)}$ appearing in Theorem \ref{thm:main} is sharp.

\vskip0.3cm

As a result, and up to the logarithmic factor in $(2)$, Theorem \ref{thm:main} implies that the estimator $\hat{\Sigma}_\delta$ performs as if $X$ were gaussian, even though it can be very far from gaussian.

\vskip0.3cm

Let us compare the outcome of Theorem \ref{thm:main} to the current state of the art that was mentioned previously.
In the subgaussian setup Theorem \ref{thm:main} improves Theorem \ref{thm:Kolt} because there are situations in which $R_{\tilde X}$ is significantly smaller than $\|\Sigma\|$ (see such an example in what follows). And, under an $L_4-L_2$ norm equivalence scenario the improvement is more dramatic: on top of an improvement in the logarithmic factor appearing in the `strong' term, the `weak' term, $(R_{\tilde{X}}/\sqrt{N}) \sqrt{\log(1/\delta)}$ is significantly smaller than the corresponding estimate of $\|\Sigma\| \sqrt{{\mathbf{r}(\Sigma)}/N} \sqrt{\log(1/\delta)}$ from Theorem \ref{thm:L-4-L-2}.

\vskip0.3cm

The proof of Theorem \ref{thm:main} is presented in the following section.

\vskip0.3cm
We end this introduction with some notation. Throughout, absolute constants are denoted by $c, c_1, \ldots, c^{\prime}, \ldots$ and their value may change from line to line. Constants that depend on a parameter $L$ are denoted by $c(L)$, $a \lesssim b$ means that there is an absolute constant $c$ such that $a \leq cb$, and $a \sim b$ means that $c b \leq a \leq c_1b$. When the constants depend on $L$ we write $a \lesssim_L b$ and $a \sim_L b$ respectively. 

\section{Proof of Theorem \ref{thm:main}}
Consider the truncated vector $\tilde{X}$ introduced in Definition \ref{def:truncation} but for now for an arbitrary level of truncation. Therefore, let $\alpha \ge 0$ and with a minor abuse of notation, redefine 
\begin{equation}
\label{deftilde}
\tilde{X} = X \IND_{\{\|X\|_2 \leq \alpha\}}\quad \text{and} \quad \tilde{\Sigma} = \E\tilde{X}\otimes\tilde{X},
\end{equation}
First, note that by the symmetry of $X$, $\tilde{X}$ is symmetric as well. Second, for every $p \geq 2$ and any $u \in \R^d$,
$$
\|\inr{\tilde{X},u}\|_{L_p} = (\E|\inr{\tilde{X},u}|^p)^{\frac{1}{p}} \leq (\E|\inr{X,u}|^p)^{\frac{1}{p}}.
$$
Hence, if $X$ is $L$-subgaussian then $\|\inr{\tilde{X},u}\|_{L_p} \leq L \sqrt{p} \|\inr{X,u}\|_{L_2}$, and if $X$ satisfies $L_4-L_2$ norm equivalence with constant $L$ then $\|\inr{\tilde{X},u}\|_{L_4} \leq L \|\inr{X,u}\|_{L_2}$.

\vskip0.3cm

More important features of $\tilde{X}$ have to do with its covariance matrix $\tilde{\Sigma}$ and trace ${\rm Tr}(\Sigma)$:
\begin{Lemma} \label{lemma:tilde-sigma}
If $X$ is zero mean and satisfies an $L_4-L_2$ norm equivalence with constant $L$, then
\begin{equation} \label{eq:tilde-sigma-1}
\|\tilde{\Sigma} - \Sigma\| \leq c(L)\frac{ \|\Sigma\|\rm{Tr}(\Sigma)}{\alpha^2},
\end{equation}
and
\begin{equation} \label{eq:tilde-sigma-2}
\bigl|{\rm Tr}(\tilde{\Sigma}) - {\rm Tr}(\Sigma) \bigr| \leq c(L)\frac{{\rm Tr}^2(\Sigma)}{\alpha^2},
\end{equation}
where $c(L)$ is a constant that depends only on $L$.
\end{Lemma}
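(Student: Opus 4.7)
The plan is to work with the ``missing mass'' $\Sigma - \tilde{\Sigma} = \E(X \otimes X \, \IND_{\{\|X\|_2 > \alpha\}})$ and ${\rm Tr}(\Sigma) - {\rm Tr}(\tilde{\Sigma}) = \E(\|X\|_2^2 \, \IND_{\{\|X\|_2 > \alpha\}})$. Both estimates will follow from a Cauchy--Schwarz splitting that isolates a fourth moment of the linear form (or of $\|X\|_2^2$) from the probability of the truncation event, which is in turn controlled via Markov's inequality applied to $\|X\|_2^4$.

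The main auxiliary fact that drives everything is that the $L_4$--$L_2$ norm equivalence of one-dimensional marginals propagates to a bound
\[
\E \|X\|_2^4 \leq L^4 ({\rm Tr}(\Sigma))^2.
\]
I would prove this by diagonalizing $\Sigma$: writing $\|X\|_2^2 = \sum_i \inr{X,e_i}^2$ in the eigenbasis $(e_i)$ with eigenvalues $\lambda_i$, expanding the square, and applying Cauchy--Schwarz together with $\E\inr{X,e_i}^4 \leq L^4 \lambda_i^2$ to each term; the cross sums collapse to $L^4 (\sum_i \lambda_i)^2$. Consequently, Markov's inequality yields
\[
\PROB(\|X\|_2 > \alpha) \leq \frac{\E\|X\|_2^4}{\alpha^4} \leq \frac{L^4 ({\rm Tr}(\Sigma))^2}{\alpha^4}.
\]

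For \eqref{eq:tilde-sigma-1}, I fix $u \in S^{d-1}$ and bound
\[
u^T(\Sigma - \tilde{\Sigma})u = \E \inr{X,u}^2 \IND_{\{\|X\|_2 > \alpha\}} \leq \bigl(\E \inr{X,u}^4\bigr)^{1/2} \bigl(\PROB(\|X\|_2 > \alpha)\bigr)^{1/2}.
\]
The first factor is at most $L^2 \E \inr{X,u}^2 \leq L^2 \|\Sigma\|$ by the $L_4$--$L_2$ equivalence, and the second factor is at most $L^2 {\rm Tr}(\Sigma)/\alpha^2$ by the above; taking a supremum over $u$ gives the claim with $c(L) = L^4$. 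For \eqref{eq:tilde-sigma-2}, the same Cauchy--Schwarz step gives
\[
{\rm Tr}(\Sigma) - {\rm Tr}(\tilde{\Sigma}) \leq \bigl(\E \|X\|_2^4\bigr)^{1/2} \bigl(\PROB(\|X\|_2 > \alpha)\bigr)^{1/2} \leq L^4 \frac{({\rm Tr}(\Sigma))^2}{\alpha^2},
\]
and since $\tilde{\Sigma} \preceq \Sigma$ (one easily checks $u^T \tilde{\Sigma} u \leq u^T \Sigma u$ from the definition), the difference is already nonnegative, so no absolute value is needed after the fact.

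There is no real obstacle here; the only mildly delicate point is realizing that a naive Markov bound on $\|X\|_2^2$ would only yield a $\sqrt{{\rm Tr}(\Sigma)}/\alpha$ factor in the probability, which is too weak. One needs the fourth-moment Markov bound, and hence the lemma that $\E\|X\|_2^4 \lesssim_L ({\rm Tr}(\Sigma))^2$ under $L_4$--$L_2$ equivalence --- this is where the assumption is genuinely used.
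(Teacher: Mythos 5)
Your proof is correct and follows essentially the same strategy as the paper's: control the missing mass $\Sigma-\tilde\Sigma$ via Cauchy--Schwarz (the paper uses three-factor H\"older on $u^T(\cdot)v$, while you use the PSD observation $\tilde\Sigma\preceq\Sigma$ to reduce to a single-vector quadratic form — a slightly cleaner but equivalent variant), and the driving input in both is the same key estimate $\E\|X\|_2^4\lesssim_L ({\rm Tr}\,\Sigma)^2$ plus Markov on $\|X\|_2^4$. For the trace bound you work directly with $\E\|X\|_2^2\IND_{\{\|X\|_2>\alpha\}}$ rather than summing the coordinate bounds as the paper does, but this is again the same computation in disguise.
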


\proof Observe that
\begin{align*}
 \|\tilde{\Sigma}- \Sigma\| &= \sup_{u,v \in S^{d-1}} \left| u^T \left( \E (X \otimes X) - \E (\tilde{X} \otimes \tilde{X}) \right) v \right|
\\
 &=  \sup_{u,v \in S^{d-1}} \left| \E \inr{X,u} \inr{X,v} \IND_{\{\|X\|_2 > \alpha\}}\right|
\\
&\leq  \sup_{u,v \in S^{d-1}} \left(\E\inr{X,u}^4\right)^{\frac{1}{4}} \cdot \left(\E\inr{X,v}^4\right)^{\frac{1}{4}} \cdot Pr^{\frac{1}{2}} (\|X\|_2 \geq \alpha).
\end{align*}
By the $L_4-L_2$ norm equivalence,
$$
\sup_{u \in S^{d-1}} \left(\E\inr{X,u}^4\right)^{\frac{1}{4}} \leq L \sup_{u \in S^{d-1}} \left(\E\inr{X,u}^2\right)^{\frac{1}{2}} = L\|\Sigma\|^{\frac{1}{2}}
$$
and
\begin{align}
\E \|X\|_2^4 = & \E \left(\sum_{i=1}^{d} \inr{X,e_i}^2 \right)^2 \leq \E \sum_{i,j} \inr{X,e_i}^2 \inr{X,e_j}^2  \leq \sum_{i,j} \bigl(\E \inr{X,e_i}^4\bigr)^{\frac{1}{2}} \bigl(\E \inr{X,e_j}^4\bigr)^{\frac{1}{2}} \nonumber
\\
\label{traceeq}
\leq & L^2 \sum_{i,j} \E\inr{X,e_i}^2 \cdot \E \inr{X,e_j}^2 = L^2 \sum_{i,j} \Sigma_{ii} \Sigma_{jj} = L^2 \bigl({\rm Tr}(\Sigma)\bigr)^2.
\end{align}
Clearly,
\begin{equation} \label{eq:tail}
Pr^{\frac{1}{2}} (\|X\|_2 \geq \alpha) \leq  \left(\frac{\E \|X\|_2^4}{\alpha^4}\right)^{\frac{1}{2}} \leq L\frac{ \bigl({\rm Tr}(\Sigma)\bigr)}{\alpha^2}
\end{equation}
and combining the two observations,
\begin{equation} \label{eq:sigma-approximation}
\|\tilde{\Sigma}-\Sigma\| \leq c(L)\frac{ \|\Sigma\|\rm{Tr}(\Sigma)}{\alpha^2},
\end{equation}
as claimed. Turning to the second part of the lemma, note that
$$
{\rm Tr}(\Sigma) = \sum_{i=1}^d \E \inr{X,e_i}^2 \ \ \ {\rm and} \ \ \ {\rm Tr}(\tilde{\Sigma}) = \sum_{i=1}^d \E \inr{X,e_i}^2\IND_{\{\|X\|_2 \leq \alpha\}}.
$$
Therefore, by the $L_4-L_2$ norm equivalence and \eqref{eq:tail},
\begin{align*}
\bigl|{\rm Tr}(\tilde{\Sigma})-{\rm Tr}(\Sigma)\bigr| = & \sum_{i=1}^d \E \inr{X,e_i}^2 \IND_{\{\|X\|_2 > \alpha\}} \leq \sum_{i=1}^d \E \bigl(\inr{X,e_i}^4\bigr)^{\frac{1}{2}} Pr^{\frac{1}{2}}(\|X\|_2 > \alpha)
\\
\leq & L^2 \left(\sum_{i=1}^d \E \inr{X,e_i}^2 \right) Pr^{\frac{1}{2}}(\|X\|_2 > \alpha) \leq c(L)\frac{{\rm Tr}^2(\Sigma)}{\alpha^2}.
\end{align*}
\endproof

The core component in the estimation procedure is denoted by $\hat{\Sigma}_{\delta, \alpha}$, and its definition for a truncation parameter $\alpha>0$ is as follows:
\begin{framed}
{\bf The estimator $\hat{\Sigma}_{\delta, \alpha}$}
\vskip+10pt
Let $\alpha > 0,\ 0<\delta<1$  and consider the given sample $X_1,...,X_N$. Set $\tilde{X}_i = X_i \IND_{\{\|X_i\|_2 \leq \alpha\}}$.
\begin{itemize}
\item Let $n = \log(1/\delta)$ and split the sample to $n$ blocks $I_j$, each one of cardinality $m = N/n$; set $M_j = \frac{1}{m}\sum_{i \in I_j} \tilde{X}_i \otimes \tilde{X}_i$.
\item Let $T = \{(u,v):\ u, v \in S^{d - 1}\}$ and for $\eps>0$ and a pair $(u,v)$ let
$$
S_{u,v}(\eps) = \left\{Y \in \mathbb{R}^{d \times d}: \left|v^T\left(M_j - Y\right)u\right| \leq \eps \ \text{for more than} \ n/2 \ \text{blocks} \right\}.
$$
\item Set
$$
S(\eps) = \bigcap_{(u,v) \in T} S_{u,v}(\eps).
$$
\item Let $\eps_0 = \inf \{\eps>0: S(\eps) \not=\emptyset\}$ and choose $\hat{\Sigma}_{\delta, \alpha}$ to be any matrix that satisfies
\begin{equation} \label{truncest}
\hat{\Sigma}_{\delta, \alpha} \in \bigcap_{\eps>\eps_0} S(\eps).
\end{equation}
\end{itemize}
\end{framed}

While the right truncation level is given in Definition \ref{def:truncation}, namely 
$$
\beta=\left(\frac{{\rm Tr}(\Sigma)\|\Sigma\|N}{\gamma}\right)^{\frac{1}{4}},
$$
its definition depends on the identities of ${\rm Tr}(\Sigma)$ and $\|\Sigma\|$, which are unknown. To address this issue one first invokes a median-of-means estimator, denoted by $\hat{\varphi}_1$, and show that with high probability, 
$$
\frac{1}{2} {\rm Tr}(\Sigma) \leq \hat{\varphi}_1 \leq 2{\rm Tr}(\Sigma).
$$
Then $\hat{\Sigma}_{\delta,\alpha}$ is performed on an independent part of the sample and at a truncation level of $\alpha \sim \hat{\varphi}_1$, i.e., of the order of ${\rm Tr}(\Sigma)$. The outcome in an estimator $\hat{\varphi}_2$ that satisfies 
$$
\frac{\|\Sigma\|}{2} \leq \hat{\varphi}_2 \leq 2 \|\Sigma\|
$$
with high probability. 

The combination of $\hat{\varphi}_1$ and $\hat{\varphi}_2$ allows one to identify $\beta$ up to an absolute constant. With that information, $\hat{\Sigma}_{\delta,\alpha}$ is preformed again, this time at the `correct level', resulting in a matrix that is a fine approximation of $\Sigma$. 

\vskip0.3cm

With that in mind, the core of the proof of Theorem \ref{thm:main} is the next Lemma.
\begin{Lemma} \label{truncestlemma}
Using the notation introduced previously, the following holds for $\hat{\Sigma}_{\delta, \alpha}$:
\begin{description}
\item{(1)} If $X$ is $L$-subgaussian, then with probability at least $1-\delta$,
$$
\|\hat{\Sigma}_{\delta, \alpha}-\tilde{\Sigma}\| \leq c(L) \left(\|\Sigma\| \left(\sqrt{\frac{\mathbf{r}(\Sigma)}{N}} + \frac{\mathbf{r}(\Sigma)}{N}\right) + \frac{R_{\tilde X}}{\sqrt{N}} \sqrt{\log(1/\delta)} \right).
$$
\item{(2)} If $X$ satisfies an $L_4-L_2$ norm equivalence, $N \geq c^\prime(L)\mathbf{r}(\Sigma) \log \mathbf{r}(\Sigma)$ and
$$
c_1(L)\sqrt{{\rm Tr}(\Sigma)} \le \alpha \le c_2(L)\left(\frac{{\rm Tr}(\Sigma)\|\Sigma\|N}{\log \mathbf{r}(\Sigma)}\right)^{\frac{1}{4}}
$$
then with probability at least $1-\delta$,
\[
\|\hat{\Sigma}_{\delta, \alpha}-\tilde{\Sigma}\| \leq c(L)\left(\|\Sigma\|\sqrt{\frac{\mathbf{r}(\Sigma)\log \mathbf{r}(\Sigma)}{N}} + \frac{R_{\tilde{X}}}{\sqrt{N}} \sqrt{\log(1/\delta)}\right),
\]
\end{description}
where $R_{\tilde X}$ is as in \eqref{rsquared}. 

In both cases  $R_{\tilde{X}} \leq c(L)\|\Sigma\|$ and $c(L), c^{\prime}(L), c_1(L), c_2(L)$ are constants that depend only on $L$.
\end{Lemma}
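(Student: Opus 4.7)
The plan is to recognise $\hat\Sigma_{\delta,\alpha}$ as the median-of-means-over-extreme-points mean estimator of Theorem~\ref{thm:LM-mod} applied to the random vector $W=\tilde X\otimes\tilde X$ in $\R^{d\times d}$ equipped with the operator norm. Its dual is the nuclear norm, whose extreme points are $\{uv^\top : u,v\in S^{d-1}\}$, and for such a functional and a matrix $M$ the pairing is $v^\top M u$; hence the decision sets $S_{u,v}(\eps)$ in the construction of $\hat\Sigma_{\delta,\alpha}$ are precisely those of Theorem~\ref{thm:LM-mod}, and its weak-variance parameter coincides with $R_{\tilde X}$. Thus
\[
\|\hat\Sigma_{\delta,\alpha}-\tilde\Sigma\|\;\lesssim\;\E\|Y_N\|+\frac{\E\|G\|}{\sqrt N}+\frac{R_{\tilde X}}{\sqrt N}\sqrt{\log(1/\delta)},
\]
where $Y_N=\tfrac1N\sum_i(\tilde X_i\otimes\tilde X_i-\tilde\Sigma)$ and $G$ is a centred Gaussian matrix having the covariance of $\tilde X\otimes\tilde X-\tilde\Sigma$. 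The proof then reduces to bounding these three quantities.

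For $R_{\tilde X}$, truncation by $\IND_{\{\|X\|_2\le\alpha\}}$ only decreases $L_p$-norms of marginals, so $\tilde X$ inherits the $L_4$--$L_2$ norm equivalence, and for $u,v\in S^{d-1}$ Cauchy--Schwarz gives $\E(\inr{\tilde X,u}\inr{\tilde X,v})^2\leq L^4\|\Sigma\|^2$, hence $R_{\tilde X}\leq L^2\|\Sigma\|$. In the subgaussian case, $\tilde X$ remains $L$-subgaussian, so Theorem~\ref{thm:Kolt} applied to the truncated sample at constant confidence, combined with $\|\tilde\Sigma\|\leq\|\Sigma\|$ and $\mathrm{Tr}(\tilde\Sigma)\leq\mathrm{Tr}(\Sigma)$, gives the required bound on $\E\|Y_N\|$. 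The Gaussian companion $\E\|G\|/\sqrt N$ is controlled by the Gaussian version of Koltchinskii--Lounici applied to $Y\otimes Y-\tilde\Sigma$ with $Y\sim\mathcal N(0,\tilde\Sigma)$, whose covariance matches that of $\tilde X\otimes\tilde X-\tilde\Sigma$ up to $L$-factors by the fourth-moment control.

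In the $L_4$--$L_2$ case, truncation is used decisively: $\|\tilde X\otimes\tilde X-\tilde\Sigma\|\leq\alpha^2+\|\Sigma\|$. The intrinsic-dimension matrix Bernstein inequality (with intrinsic dimension $\lesssim_L\mathbf r(\Sigma)$) then delivers
\[
\E\|Y_N\|\;\lesssim\;\sqrt{\frac{v\log\mathbf r(\Sigma)}{N}}+\frac{\alpha^2\log\mathbf r(\Sigma)}{N},\qquad v:=\|\E(\tilde X\otimes\tilde X)^2\|.
\]
Writing $v=\sup_u\E\|\tilde X\|_2^2\inr{\tilde X,u}^2$, Cauchy--Schwarz together with the $L_4$--$L_2$ bound on marginals and the estimate $\E\|\tilde X\|_2^4\leq L^2\mathrm{Tr}(\Sigma)^2$ from~\eqref{traceeq} give $v\leq c(L)\mathrm{Tr}(\Sigma)\|\Sigma\|$; hence the variance contribution is at most $c(L)\|\Sigma\|\sqrt{\mathbf r(\Sigma)\log\mathbf r(\Sigma)/N}$. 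The upper bound on $\alpha$ was engineered so that the Bernstein ``max'' term $\alpha^2\log\mathbf r(\Sigma)/N$ is of the same order, and the sample-size hypothesis ensures Bernstein applies. A parallel computation on a Gaussian matrix with matching covariance yields the same bound on $\E\|G\|/\sqrt N$.

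The most delicate technical step is the $L_4$--$L_2$ case: combining intrinsic-dimension matrix Bernstein with a tight moment computation of $\|\E(\tilde X\otimes\tilde X)^2\|$, and then balancing the two Bernstein contributions at the prescribed truncation level. The lower bound on $\alpha$ keeps $\tilde\Sigma$ comparable to $\Sigma$ (so that $\mathbf r(\tilde\Sigma)\sim_L\mathbf r(\Sigma)$), while the upper bound prevents the uniform Bernstein term from dominating the variance term. A secondary subtlety is matching the Gaussian companion $\E\|G\|/\sqrt N$ to the same strong term without introducing additional dimension-dependent logarithms beyond $\log\mathbf r(\Sigma)$.
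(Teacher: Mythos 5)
Your overall architecture matches the paper's: identify $\hat\Sigma_{\delta,\alpha}$ with the mean estimator of Theorem~\ref{thm:LM-mod} applied to $W=\tilde X\otimes\tilde X$ under trace duality, bound $R_{\tilde X}$ by Cauchy--Schwarz plus $L_4$--$L_2$, and control the strong term via a quadratic-process/Bernstein bound on $\E\|Y_N\|$. However, two steps as written have real gaps.

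\textbf{The Gaussian companion $\E\|G\|$.} You propose to replace $G$ (the Gaussian matrix with the covariance of $\tilde X\otimes\tilde X$) by $Y\otimes Y-\tilde\Sigma$ with $Y\sim\mathcal N(0,\tilde\Sigma)$, arguing that the two covariances agree ``up to $L$-factors by fourth-moment control.'' That step does not go through: the covariance of a rank-one random matrix is a 4-tensor, and the $L_4$--$L_2$ condition gives only one-sided, diagonal-type control of $\E\inr{\tilde X,u}^2\inr{\tilde X,v}^2$; it does not give a two-sided comparison of increments $\E\bigl((x^*-y^*)(\tilde X\otimes\tilde X)\bigr)^2$ that Sudakov--Fernique or Slepian would require. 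The paper avoids this entirely via the observation (used in Section~\ref{truncestlemmaproof}) that by the multivariate CLT and lower semicontinuity,
\[
\E\|G\|\;\le\;\liminf_{N\to\infty}\sqrt{N}\,\E\|Y_N\|,
\]
so that once $\E\|Y_N\|$ is bounded for all $N$, the Gaussian term follows for free. You should adopt this reduction; it eliminates the need for any Gaussian comparison argument.

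\textbf{Subgaussian case, $\E\|Y_N\|$.} You want to apply Theorem~\ref{thm:Kolt} directly to the truncated sample, using $\|\tilde\Sigma\|\le\|\Sigma\|$ and ${\rm Tr}(\tilde\Sigma)\le{\rm Tr}(\Sigma)$. The catch is that part (1) of the lemma imposes \emph{no} lower bound on $\alpha$, and $\tilde X$ need not be $L'$-subgaussian relative to \emph{its own} covariance: one has $\|\inr{\tilde X,u}\|_{L_p}\le L\sqrt p\,\|\inr{X,u}\|_{L_2}$, but $\|\inr{\tilde X,u}\|_{L_2}$ can be much smaller than $\|\inr{X,u}\|_{L_2}$ when $\alpha$ is small, so the ratio $\|\inr{\tilde X,u}\|_{\psi_2}/\|\inr{\tilde X,u}\|_{L_2}$ is unbounded. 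Also $\mathbf r(\tilde\Sigma)$ need not be $\lesssim\mathbf r(\Sigma)$ since both numerator and denominator shrink. The paper sidesteps this by bounding the quadratic process with Corollary~1.9 of \cite{Mendelson07}, comparing the $\psi_2(\tilde X)$-metric on $S^{d-1}$ to the $L_2(X)$-metric of the \emph{untruncated} vector ($\gamma_2(S^{d-1},\psi_2(\tilde X))\le L\,\gamma_2(S^{d-1},L_2(X))\le cL\sqrt{{\rm Tr}(\Sigma)}$ by majorizing measures), so the resulting bound is stated in terms of $\Sigma$ for any $\alpha>0$. Your route would need an additional lower bound on $\alpha$ and a quantitative comparison of $\tilde\Sigma$ with $\Sigma$ to be salvaged.

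\textbf{A smaller point in the $L_4$--$L_2$ case.} You assert the intrinsic dimension of $B=\E(\tilde X\otimes\tilde X)^2$ is $\lesssim_L\mathbf r(\Sigma)$, but this needs a \emph{lower} bound $\|B\|\gtrsim\|\tilde\Sigma\|\,{\rm Tr}(\tilde\Sigma)$ (the paper cites an FKG-type inequality from \cite{Minsk17a}) combined with the comparability of $\tilde\Sigma$ and $\Sigma$ under the assumed lower bound on $\alpha$; your upper bound on $\|B\|$ alone does not control $\mathbf r(B)={\rm Tr}(B)/\|B\|$. The remainder of your Bernstein computation and the balancing of the ``max'' term $\alpha^2\log\mathbf r(\Sigma)/N$ against the variance term via the prescribed range of $\alpha$ agree with the paper.
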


The proof of the lemma is presented in Section \ref{truncestlemmaproof}. Assuming its validity let us complete the proof of Theorem \ref{thm:main}. From this point on and without the loss of generality, assume that the given sample is of cardinality $3N$, as that only affects the constant factors appearing in the bounds.

\subsubsection*{{\bf Stage 1. Estimation of $\rm{Tr}(\Sigma)$}}
The first goal is to use the first $N$ observations $X_1, \ldots, X_N$ to construct the estimator $\hat{\varphi}_1$, for which, with high probability  $\hat{\varphi}_1 \sim \rm{Tr}(\Sigma)$.
Since
\[
{\rm Tr}(\Sigma) = \E\sum_{i=1}^d \inr{X,e_i}^2,
\]
a standard median-of-means estimator $\hat{\varphi}_1$ of $\E\sum_{i=1}^d \inr{X,e_i}^2$ (see \cite{Nemir83} for what is by now a standard argument) satisfies that with probability at least $1 - \delta$,
\[
|\hat{\varphi}_1 - {\rm Tr}(\Sigma)| \le c\sqrt{\var\left(\sum_{i=1}^d \inr{X,e_i}^2\right)\frac{\log (1/\delta)}{N}}.
\]
Using \eqref{traceeq},
$$
\var\left(\sum_{i=1}^d \inr{X,e_i}^2\right) \le (L^2 - 1){\rm Tr}(\Sigma)^2,
$$
and therefore,
\[
|\hat{\varphi}_1 - {\rm Tr}(\Sigma)| \le c(L){\rm Tr}(\Sigma)\sqrt{\frac{\log (1/\delta)}{N}}.
\]
Hence, if $N \ge c'(L)\log (1/\delta)$, then with probability at least $1-\delta$ one has 
\begin{equation} \label{eq:phi-1}
\frac{1}{2}{\rm Tr}(\Sigma) \leq \hat{\varphi}_1 \leq 2 {\rm Tr}(\Sigma).
\end{equation}

\subsubsection*{{\bf Stage 2. Estimation of $\|\Sigma\|$}}
In this stage, the second part of the sample $X_{N + 1}, \ldots, X_{2N}$ is utilized, and the procedure receives as an additional input $\hat{\varphi}_1$ that satisfies \eqref{eq:phi-1}. To ease notation, one may assume that ${\rm Tr}(\Sigma)$ is known and set $\alpha = \kappa(L)\sqrt{{\rm Tr}(\Sigma)}$, where $\kappa(L)$ is a constant that depends only on $L$.

 Using the notation from \eqref{deftilde} and by Lemma \ref{lemma:tilde-sigma} it follows that
\[
 \|\tilde{\Sigma} - \Sigma\| \le  c(L)\frac{\|\Sigma\|}{\kappa^2(L)},
\]
and 
\[
\bigl|{\rm Tr}(\tilde{\Sigma})-{\rm Tr}(\Sigma)\bigr| \le c(L)\frac{{\rm Tr}(\Sigma)}{\kappa^2(L)}.
\]

In the $L$-subgaussian case, invoking Lemma \ref{truncestlemma} and the triangle inequality,
\begin{align*}
\|\hat{\Sigma}_{\delta, \alpha} - \Sigma\| \le & \|\tilde{\Sigma} - \Sigma\| + \|\hat{\Sigma}_{\delta, \alpha} - \tilde{\Sigma}\| \le \frac{\|\Sigma\|}{10} + c(L) \|\Sigma\|\left( \sqrt{\frac{\mathbf{r}(\Sigma)}{N}} + \frac{\mathbf{r}(\Sigma)}{N}+ \sqrt{\frac{\log(1/\delta)}{N}} \right)
\\
\leq & \frac{\|\Sigma\|}{2},
\end{align*}
provided that $N \ge c'(L)(\mathbf{r}(\Sigma) + \log (1/\delta))$ for a large enough constant $c'(L)$ and $c(L)/\kappa^2(L) \le \frac{1}{10}$. In that case, setting $\hat{\varphi}_2 = \|\hat{\Sigma}_{\delta, \alpha}\|$, it follows that 
\begin{equation} \label{varphisecond}
\frac{\|\Sigma\|}{2} \leq \hat{\varphi}_2 \leq 2\|\Sigma\|.
\end{equation}
Finally, in the case of $L_4-L_2$ norm equivalence, and again by Lemma \ref{truncestlemma}, one has that \eqref{varphisecond} holds as long as $N \ge c'(L)(\mathbf{r}(\Sigma)\log \mathbf{r}(\Sigma) + \log (1/\delta))$. Indeed, since $R_{\tilde{X}} \le c(L)\|\Sigma\|$, one has
\[
\|\hat{\Sigma}_{\delta, \alpha} - \Sigma\| \le \frac{1}{10}\|\Sigma\| + c(L) \|\Sigma\|\left( \sqrt{\frac{\mathbf{r}(\Sigma) \log(\mathbf{r}(\Sigma))}{N}} + \sqrt{\frac{\log(1/\delta)}{N}} \right) \leq \frac{\|\Sigma\|}{2},
\]
as required.

\subsubsection*{{\bf Stage 3. Estimation of $\Sigma$}}
The final step uses the third part of the sample $X_{2N + 1}, \ldots, X_{3N}$. Consider a truncation level $\beta$ as in Definition \ref{def:truncation}, and which, thanks to the first two stages, can be estimated by $\hat{\beta}$ up to an absolute multiplicative factor. Therefore, to ease notation again, simplicity, assume that $\beta$ itself is knows.

For that choice of truncation parameter consider $\tilde{X}$ and $\tilde{\Sigma}$ as in \eqref{deftilde} and let $\hat{\Sigma}_\delta = \hat{\Sigma}_{\delta, \beta}$.

By the triangle inequality,
$$
\|\hat{\Sigma}_\delta - \Sigma\| \leq \|\hat{\Sigma}_\delta - \tilde{\Sigma}\| + \|\tilde{\Sigma}-\Sigma\|,
$$
and by Lemma \ref{lemma:tilde-sigma} the quantity $\|\tilde{\Sigma}-\Sigma\|$ is smaller than the wanted accuracy for the chosen level $\beta$. The required bound on $\|\hat{\Sigma}_\delta - \Sigma\|$ follows immediately from Lemma \ref{truncestlemma}, and Theorem \ref{thm:main} follows by taking the union bound over the events analyzed in three stages and combining the conditions on $N$.

\section{Proof of Lemma \ref{truncestlemma}}

Thanks to Theorem \ref{thm:LM-mod}, the proof of Lemma \ref{truncestlemma} follows once one establishes sufficient control on $\E \|Y_N\|$, $\E\|G\|$ and $R_{\tilde X}$.

\label{truncestlemmaproof}
\subsubsection*{{\bf Controlling $R_{\tilde X}$}}
The required estimate on $R_{\tilde X}$ for an arbitrary truncation level $\alpha$ is presented in the next Lemma.
\begin{Lemma} \label{rlemma}
Assume that $X$ is zero mean and satisfies an $L_4-L_2$ norm equivalence with constant $L$. Setting $\mathbf{v}^2(X) = \sup_{v \in S^{d-1}} \E\inr{X,v}^4$ one has that 
\[
R_{\tilde{X}} \le \mathbf{v}(X) \lesssim_L  \|\Sigma\|.
\]
\end{Lemma}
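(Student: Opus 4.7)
My plan is to peel the statement into two independent inequalities: $R_{\tilde X}\le \mathbf{v}(X)$, which is purely a second-moment bound on a product of marginals of $\tilde X$, and $\mathbf{v}(X)\lesssim_L \|\Sigma\|$, which is exactly the $L_4$--$L_2$ norm equivalence applied to the one-dimensional marginals of $X$. Both steps should be essentially one-liners.

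First I would start from the definition
\[
R^2_{\tilde X} = \sup_{u,v \in S^{d-1}} \var\bigl(\inr{\tilde X,u}\inr{\tilde X,v}\bigr),
\]
bound the variance by the raw second moment $\E(\inr{\tilde X,u}\inr{\tilde X,v})^2$, and then apply the Cauchy--Schwarz inequality to get $\E\inr{\tilde X,u}^2\inr{\tilde X,v}^2 \le (\E\inr{\tilde X,u}^4)^{1/2}(\E\inr{\tilde X,v}^4)^{1/2}$. Finally I would use the observation already made at the beginning of the section that truncation only decreases $L_p$ norms of linear marginals, so each factor is dominated by $\sup_{w\in S^{d-1}}\E\inr{X,w}^4 = \mathbf{v}^2(X)$. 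Taking square roots gives $R_{\tilde X}\le \mathbf{v}(X)$.

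For the second inequality I would simply invoke the $L_4$--$L_2$ norm equivalence of $X$: for every $v\in S^{d-1}$,
\[
\E\inr{X,v}^4 \le L^4 \bigl(\E\inr{X,v}^2\bigr)^2 \le L^4\|\Sigma\|^2,
\]
so $\mathbf{v}(X)\le L^2\|\Sigma\|$. Combining the two bounds finishes the lemma.

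There really is no main obstacle here; everything is absorbed into the already-verified facts that $\tilde X$ inherits $L_p$-bounds from $X$ and that the $L_4$--$L_2$ norm equivalence controls fourth marginals by $\|\Sigma\|$. The whole content of the lemma is that, for operator-norm covariance estimation, the natural weak parameter $R_{\tilde X}$ is automatically controlled by $\|\Sigma\|$ under the bounded-kurtosis assumption and is therefore never the dominant term in the error bound of Theorem \ref{thm:main}.
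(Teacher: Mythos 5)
Your proof is correct and follows essentially the same route as the paper: you drop the squared mean $(v^T\tilde{\Sigma}u)^2$, apply Cauchy--Schwarz to $\E\inr{\tilde X,u}^2\inr{\tilde X,v}^2$, use that truncation only decreases $L_4$ moments to pass from $\tilde X$ to $X$, and then apply the $L_4$--$L_2$ norm equivalence together with $\E\inr{X,v}^2\le\|\Sigma\|$. Nothing substantive differs from the paper's argument.
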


\begin{proof}
For every $u,v \in S^{d-1}$, $\E \inr{\tilde{X},v} \inr{\tilde{X},u} = v^T \tilde{\Sigma} u$; therefore,
\begin{align*}
\E \bigl(v^T (\tilde{X} \otimes \tilde{X} - \tilde{\Sigma})u\bigr)^2 = & \E \inr{\tilde{X},v}^2 \inr{\tilde{X},u}^2 - (v^T\tilde{\Sigma} u)^2 \leq \E \inr{\tilde{X},v}^2 \inr{\tilde{X},u}^2
\\
\leq & \bigl(\E \inr{\tilde{X},v}^4 \bigr)^{\frac{1}{2}} \cdot \bigl(\E \inr{\tilde{X},u}^4 \bigr)^{\frac{1}{2}},
\end{align*}
implying that $R_{\tilde{X}} \le \mathbf{v}(X)$.

Also, recalling that $X$ satisfies and $L_4-L_2$ norm equivalence,
$$
\E\inr{X,v}^4 \leq L^4 \bigl(\E \inr{X,v}^2\bigr)^2 \leq L^4 \|\Sigma\|^2
$$
implying that $\mathbf{v}(X) \leq L^2 \|\Sigma\|$, as claimed.

\end{proof}

\subsubsection*{{\bf Controlling $\E\|G\|$ and $\E \|Y_N\|$}}
In the context of Theorem \ref{thm:LM-mod}, $G$ is the zero mean gaussian vector on $\R^{d \times d}$ whose covariance coincides with that of $W=\tilde{X} \otimes \tilde{X}$. Instead of dealing with that vector directly, note that
\begin{equation} \label{eq:gaussian-control}
\E\|G\| \leq \liminf_{N \to \infty} \sqrt{N} \E\|Y_N\|.
\end{equation}
Indeed, for every finite set $T^\prime$, 
$$
\E \|G\| = \sup_{T^\prime \subset B^{\circ}, \ T^\prime \ {\rm is \ finite} } \E \max_{x^* \in T^\prime} x^*(G),
$$
and by the multivariate CLT, for every finite set $T^\prime$, $\left\{ N^{-1/2}\sum_{i=1}^N x^*(W_i-\E W) : x^* \in T^\prime \right\}$ converges weakly to $\left\{ x^*(G) : x^* \in T^\prime\right\}$. Hence, \eqref{eq:gaussian-control} follows from tail integration.

\vskip0.3cm
Thanks to \eqref{eq:gaussian-control}, all that remains is to bound $\E\|Y_N\|$.

\subsubsection*{The subgaussian case}
Fix an integer $N$ and note that
\begin{equation} \label{eq:quadratic}
\left\|\frac{1}{N} \sum_{i=1}^N \tilde{X}_i \otimes \tilde{X}_i - \tilde{\Sigma} \right\| = \sup_{u \in S^{d-1}} \left|\frac{1}{N} \sum_{i=1}^N \inr{\tilde{X}_i,u}^2 - \E \inr{\tilde{X}_i,u}^2 \right|,
\end{equation}
which is the supremum of a quadratic empirical process indexed by $S^{d-1}$. Such empirical processes have been studied extensively (see, e.g., \cite{Mendelson10, Mendelson16, Mendelson07}), mainly using chaining methods. As it happens, quadratic \emph{subgaussian} processes may be controlled in terms of a natural metric invariant of the indexing class---the so-called \emph{$\gamma_2$ functional}\footnote{Rather than defining the $\gamma_2$ functional, we refer the reader to \cite{Talagrand14} for a detailed exposition on the topic, and to \cite{Mendelson10, Mendelson16, Mendelson07} for the study of the quadratic empirical process in this and more general situations.}. In the case of \eqref{eq:quadratic}, the indexing class is $S^{d-1}$ whose elements are viewed as linear functionals on $\R^d$, and the underlying metric is the $\psi_2$ norm endowed by the random vector $\tilde{X}$. By Corollary 1.9 from \cite{Mendelson07} it follows that
\begin{equation} \label{eq:MPT}
\E \sup_{u \in S^{d-1}} \left|\frac{1}{N} \sum_{i=1}^N \inr{\tilde{X}_i,u}^2 - \E \inr{\tilde{X}_i,u}^2 \right| \leq c \left({\cal D} \frac{\gamma_2(S^{d-1},\psi_2(\tilde{X}))}{\sqrt{N}} + \frac{\gamma_2^2(S^{d-1},\psi_2(\tilde{X}))}{N}\right),
\end{equation}
where $c$ is an absolute constant and
$$
{\cal D} = {\cal D}(S^{d-1},\psi_2) = \sup_{u \in S^{d-1}} \|\inr{\tilde{X},u}\|_{\psi_2} \sim \sup_{u \in S^{d-1}} \sup_{p \geq 2} \frac{\left(\E|\inr{\tilde{X},u}|^p\right)^{\frac{1}{p}}}{\sqrt{p}}.
$$

To estimate \eqref{eq:MPT} one requires two facts (see, e.g., \cite{Talagrand14} for more details). Firstly, a general property of the $\gamma_2$ functional is monotonicity in $d$: if $(T,d)$ is a metric space and $d^\prime$ is another metric on $T$ which satisfies that for every $t_1,t_2 \in T$, $d(t_1,t_2) \leq \kappa d^\prime(t_1,t_2)$, then
$$
\gamma_2(T,d) \leq \kappa \gamma_2(T,d^\prime).
$$
Here, for every $p \geq 2$ and $u \in \R^d$,
$$
\bigl(\E|\inr{\tilde{X},u}|^p\bigr)^{\frac{1}{p}} \leq \bigl(\E|\inr{X,u}|^p\bigr)^{\frac{1}{p}} \leq L \sqrt{p}\bigl(\E|\inr{X,u}|^2\bigr)^{\frac{1}{2}},
$$
implying that
$$
\|\inr{\tilde{X},u}\|_{\psi_2} \leq L \|\inr{X,u}\|_{L_2};
$$
hence, $\gamma_2(S^{d-1},\psi_2(\tilde{X})) \leq L \gamma_2(S^{d-1},L_2(X))$.

Secondly, by Talagrand's majorizing measures theorem, if $G$ is a zero mean gaussian random vector with the same covariance as $X$ then
$$
\gamma_2(S^{d-1},L_2(X)) \leq c \E \sup_{u \in S^{d-1}} \inr{G,u} \leq c \bigl(\E \|G\|_2^2\bigr)^{\frac{1}{2}} = c \sqrt{{\rm Tr}(\Sigma)},
$$
for a some absolute constant $c$.

Finally, again thanks to the fact that $X$ is $L$-subgaussian,
$$
{\cal D} \leq L \sup_{u \in S^{d-1}} \|\inr{X,u}\|_{L_2} = L \|\Sigma\|^{\frac{1}{2}}.
$$
Therefore, by \eqref{eq:MPT}, for every $N$,
$$
\E\|Y_N\| \leq c(L) \left( \|\Sigma\|^{1/2} \sqrt{\frac{{\rm Tr}(\Sigma)}{N}} + \frac{{\rm Tr}(\Sigma)}{N} \right),
$$
and in particular, $\liminf_{N \to \infty} \sqrt{N} \E\|Y_N\| \leq c(L) \|\Sigma\|^{1/2} \sqrt{{\rm Tr}(\Sigma)}$.

This completes the proof of the first part of Lemma \ref{truncestlemma}.
\endproof

\subsubsection*{{\bf $L_4-L_2$ norm equivalence}}
Just as in the subgaussian case, the key issue is finding a suitable estimate on $\E\|Y_N\|$. Thanks to the fact that $\tilde{X}$ is a truncated random vector, one may apply a version of the matrix Bernstein inequality.

We invoke Corollary 7.3.2 from the survey \cite{Tropp15} (which is a slightly modified version of the original result from \cite{Minsk17}): if $Z$ is a random vector which satisfies that $\|Z \otimes Z\| \leq \beta $ almost surely, and $B=\E(Z \otimes Z)^2$, then
\begin{equation} \label{eq:matrix-Bernstein}
\E \left\|\frac{1}{N}\sum_{i=1}^N Z_i \otimes Z_i - \E (Z \otimes Z) \right\| \leq c \left(\sqrt{\frac{\|B\| \log(\mathbf{r}(B))}{N}} +  \frac{\beta\log(\mathbf{r}(B))}{N} \right).
\end{equation}
Here, $Z=X\IND_{\{\|X\| \leq \alpha\}}$ for $\alpha$ as in Definition \ref{def:truncation}, and all that remains is to estimate $\|B\|$ and $\mathbf{r}(B)$.

It is straightforward to verify that
$$
c \|\tilde \Sigma\| {\rm Tr}(\tilde{\Sigma}) \leq \|B\| \leq c_1(L) \|\Sigma\| {\rm Tr}({\Sigma}) \ \ \ {\rm and} \ \ \ {\rm Tr}(B) \leq c_1(L) \bigl({\rm Tr}({\Sigma})\bigr)^2:
$$
the upper estimates on $\|B\|$ and ${\rm Tr}(B)$  follow from a direct computation and the fact that $X$ satisfies an $L_4-L_2$ norm equivalence (see, e.g., Lemma 4.1 in \cite{Minsk18}); the lower estimate is an outcome of the FKG inequality (see Corollary 5.1 in the supplementary material to \cite{Minsk17a}).

Turning to the upper bound $\mathbf{r}(B)$, by Lemma \ref{lemma:tilde-sigma} and using its notation, both $\|\tilde{\Sigma}\|$ and ${\rm Tr}(\tilde{\Sigma})$ are equivalent up to multiplicative constant factors to $\|\Sigma\|$ and ${\rm Tr}(\Sigma)$ respectively, as long as $\alpha \ge c_2(L) \sqrt{{\rm Tr}(\Sigma)}$; hence,
$\mathbf{r}(B) \lesssim_L \mathbf{r}(\Sigma)$.

Finally, observe that $\|Z \otimes Z\|  =  \|Z\|_2^2 \leq \alpha^2$. By \eqref{eq:matrix-Bernstein} and the fact that $N \gtrsim_L \mathbf{r}(\Sigma) \log \mathbf{r}(\Sigma)$,
\begin{align} \label{eq:Y-N-in-proof}
\E \|Y_N\| &\leq  c(L) \left(\|\Sigma\|^{1/2} \sqrt{\frac{{\rm Tr}(\Sigma)\log \mathbf{r}(\Sigma)}{N}} + \alpha^2 \frac{\log \mathbf{r}(\Sigma)}{N}\right) \nonumber
\\
&= c(L)\|\Sigma\|\left(\sqrt{\frac{\mathbf{r}(\Sigma)\log \mathbf{r}(\Sigma)}{N}}  +  \frac{\alpha^2\mathbf{r}(\Sigma)\log \mathbf{r}(\Sigma)}{{\rm Tr}(\Sigma) N}\right).
\end{align}
In particular,
$$
\liminf_{N \to \infty} \sqrt{N} \E \|Y_N\| \leq c^{\prime}(L)\|\Sigma\|\sqrt{\mathbf{r}(\Sigma) \log \mathbf{r}(\Sigma)},
$$
provided that $\alpha = \alpha(N)$ satisfies
\[
\liminf_{N \to \infty}\frac{\alpha^2\mathbf{r}(\Sigma)\log \mathbf{r}(\Sigma)}{{\rm Tr}(\Sigma) \sqrt{N}} \le c^{\prime\prime}(L)\sqrt{\mathbf{r}(\Sigma) \log \mathbf{r}(\Sigma)}.
\]
That is the case if 
\begin{equation} \label{alphacond}
\alpha \le c_2(L)\left(\frac{{\rm Tr}(\Sigma)\|\Sigma\|N}{\log \mathbf{r}(\Sigma)}\right)^{\frac{1}{4}}.
\end{equation}
Finally, observe that when \eqref{alphacond} holds,  
$$
\frac{\alpha^2\mathbf{r}(\Sigma)\log \mathbf{r}(\Sigma)}{{\rm Tr}(\Sigma) N} \le c_2^2(L)\sqrt{\frac{\mathbf{r}(\Sigma)\log \mathbf{r}(\Sigma)}{N}},
$$
and combined with \eqref{eq:Y-N-in-proof} this completes the proof of second part of the lemma. 
\endproof

\subsubsection*{{\bf Concluding remarks}}
We start this section with an alternative way of estimating $\|\Sigma\|$ that does not require the knowledge of either ${\rm Tr}(\Sigma)$ or $L$, and does not have the extra factor $\log \mathbf{r}(\Sigma)$ appearing in the condition on $N$ when $X$ satisfies an $L_4-L_2$ norm equivalence. The drawback of this approach is that the bound depends on the dimension $d$, rather than on $\mathbf{r}(\Sigma)$.

\vskip0.3cm
\emph{Sketch of the argument.} Let $\mathcal{N}$ be a minimal $1/4$ cover of $S^{d-1}$ with respect to the Euclidean norm. Thus, $\|\Sigma\| \sim \sup_{u \in \mathcal{N}}u^T\Sigma u$. For any fixed $u$, the median of means estimator $\hat{\varphi}_{2, u}$ of $\E u^T X\otimes X u$ satisfies that with probability at least $1 - \delta$,
\[
|\hat{\varphi}_{2, u} - u^T\Sigma u| \le c(L)\|\Sigma\|\sqrt{\frac{\log (1/\delta)}{N}},
\]
because $\var\left(u^TX\otimes X u\right) \le L^4 \|\Sigma\|^2$. Finally, recalling that $|\mathcal{N}| \le 9^d$, the union bound shows that with probability at least $1 - \delta$
\[
\sup\limits_{u \in \mathcal{N}}|\hat{\varphi}_{2, u} - u^T\Sigma u| \le c_1(L)\|\Sigma\|\sqrt{\frac{d + \log (1/\delta)}{N}}.
\]
Therefore, when $N \ge c_1'(L)(d + \log (1/\delta))$, one has that $\sup\limits_{u \in \mathcal{N}} \hat{\varphi}_{2, u}  \sim \|\Sigma\|$ with probability at least $1-\delta$.

\endproof

\vskip0.4cm
We end this note with an example showing that there could be a substantial gap between $R_X$ and $\|\Sigma\|$ (and in a similar way, between $R_X$ and $\mathbf{v}(X)$), which is a reason for the sub-optimality of Theorem \ref{thm:Kolt} (Theorem $9$ in \cite{Koltch17}).

\begin{Example}
\label{rexample}
Let $(\eps_i)_{i=1}^d$ be independent, symmetric, $\{-1,1\}$-valued random variables, and set $\alpha_{1} > \ldots > \alpha_{d} \ge 0$. Let $X^{(i)} =\alpha_i \eps_i$ and consider $X = (X^{(1)}, \ldots, X^{(d)})$. Since the $X^{(i)}$'s are centered, independent and subgaussian with a constant subgaussian parameter, then $X$ is a centered, $L$-subgaussian random vector for some absolute constant $L$.

Let $\Sigma = \E (X \otimes X)$ and note that $\|\Sigma\| = \alpha_1^2$, $\mathbf{r}(\Sigma) = \sum\limits_{i = 1}^d \alpha_i^2/\alpha_1^2$ and
\begin{align*}
&\E\bigl(v^T (X \otimes X - \Sigma)u\bigr)^2 = \E\bigl(\sum_{i \neq j}v_iu_jX^{(i)}X^{(j)} \bigr)^2 = \sum_{i \neq j}\alpha_i^2 \alpha_j^2 (v_i^2u_j^2  + v_iv_ju_iu_j)
\\
&\le (\alpha_1\alpha_2)^2\bigl(\sum_{i, j}(v_iu_j)^2 + |v_iv_ju_iu_j|\bigr) \le (\alpha_1\alpha_2)^2\left(\|v\|^2\|u\|^2 + \inr{|v|, |u|}^2\right) \le 2(\alpha_1\alpha_2)^2.
\end{align*}
Hence, 
\begin{equation}
\label{rineq}
R_X \le \sqrt{2}\alpha_1\alpha_2 \le \alpha_1^2 = \|\Sigma\|,
\end{equation}
and the gap between $R_X$ and $\|\Sigma\|$ may be arbitrary large.

Inequality \eqref{rineq} is the best one can hope for in general. Indeed, let $Y$ be a centered random vector taking its values in $\mathbb{R}^d$, set $\Sigma = \E(Y \otimes Y)$ and consider $R_Y$. It follows that
\begin{align*}
\|\E(Y \otimes Y - \Sigma)^2\| &= \left\|\E(Y \otimes Y - \Sigma)\sum_{i = 1}^d e_ie_i^T(Y \otimes Y - \Sigma)\right\|
\\
&\le \sum_{i = 1}^d\sup\limits_{v \in S^{d - 1}}\E\left(e_i^T(Y \otimes Y - \Sigma)v\right)^2 \le dR^2_{Y}.
\end{align*}
As before, Corollary 5.1 in \cite{Minsk17a} implies that $\|\E(Y \otimes Y)^2\| \ge {\rm Tr}(\Sigma)\|\Sigma\|$. Therefore,
\begin{equation*}
dR_{Y}^2 \ge \|\E(Y \otimes Y - \Sigma)^2\| \ge \|\E(Y \otimes Y)^2\| - \|\Sigma^2\| \ge ({\rm Tr}(\Sigma))\|\Sigma\| - \|\Sigma\|^2
\end{equation*}
and
\begin{equation}
\label{rsecineq}
R_{Y} \ge \sqrt{\frac{\mathbf{r}(\Sigma) - 1}{d}}\|\Sigma\|,
\end{equation}
which is optimal when $\mathbf{r}(\Sigma) \sim d$. 
\end{Example}

\bibliography{covbib}

\newpage
\appendix
\section{Subgaussian vs. norm equivalence} \label{app:equiv}
The first example we present is the class of $L$-\emph{subexponential} random vectors. These vectors satisfy
\[
(\E|\inr{X-\mu,t}|^p)^{\frac{1}{p}} \leq L p(\E\inr{X-\mu,t}^2)^{\frac{1}{2}}
\]
for every $p \ge 2$; in particular, $X$ satisfies an $L_4-L_2$ norm equivalence with constant $4L$. On the other hand, there are obvious examples in which some marginals of $X$ need not be subgaussian. For example, if $X$ has independent components that are distributed according to an exponential random variable $y$, then for every $1 \leq i \leq d$, $\|\inr{X,e_i}\|_{\psi_2} = \|y\|_{\psi_2} = \infty$.

\vskip0.3cm
Another simple example are of random vectors with a multivariate $t$-distribution\footnote{See, for example, \cite{Kotz04} for an extensive survey on multivariate $t$-distributions and their properties.}, which, in some cases, satisfy an $L_4-L_2$ norm equivalence but are not $L$-subgaussian for any $L$. The bad subgaussian behaviour is an immediate consequence of the observation that when $d = 1$ and the random variable has $\nu$ degrees of freedom, its $\nu$-th moment does not exist.
\begin{Example}
Assume that $Z$ has a multivariate normal distribution $\mathcal{N}(0, \Sigma^{\prime})$ and $V$ is a random variable independent of $Z$ that has a $\chi^{2}_{\nu}$ distribution for some $\nu \ge 1$. Consider the random vector $X = \frac{Z}{\sqrt{V/\nu}}$, which is centred and has a multivariate $t$-distribution with parameters $(\nu, \Sigma^{\prime})$. Fix $t \in \R^d \setminus \{0\}$ and consider the random variable $\inr{X,t} = \frac{\inr{Z,t}}{\sqrt{V/\nu}}$. Observe that $\inr{Z,t}$ is normal with mean zero and variance $t^T\Sigma^{\prime} t$ and is independent of $V$, and therefore has a $t$ distribution with $\nu$ degrees of freedom. A  straightforward calculation shows that its kurtosis is $\frac{3\nu - 6}{\nu - 4}$ for $\nu > 4$ \cite{Kotz04}. Hence, $X$ satisfies an $L_4-L_2$ norm  equivalence with $L = \left(\frac{3\nu - 6}{\nu - 4}\right)^{\frac{1}{4}}$ provided that $\nu > 4$, but clearly $X$ is not subgaussian.
\end{Example}

\end{document}